\numberwithin{equation}{section}
\newtheorem{theorem}{Theorem}[section]
\newtheorem{lemma}[theorem]{Lemma}
\newtheorem{proposition}[theorem]{Proposition}
\newtheorem{corollary}[theorem]{Corollary}
\theoremstyle{remark}
\newtheorem{remark}[theorem]{Remark}
\theoremstyle{definition}
\newtheorem{definition}[theorem]{Definition}
\DeclareMathOperator{\tr}{tr}
\title{On some nonlinear fractional equations involving the Bessel operator}
\author{Simone Secchi}
\address{Dipartimento di Matematica e Applicazioni, Universit\`a degli Studi di Milano Bicocca, via Cozzi 55, 20255 Milano, Italy}
\email{Simone.Secchi@unimib.it}
\thanks{Supported by FIRB 2012 ``Dispersive equations: Fourier analysis and variational methods'' and by PRIN 2012 ``Aspetti
variazionali e perturbativi nei problemi differenziali
nonlineari''}
\date{\today}
\keywords{Fractional Sobolev Spaces, Bessel spaces, Fractional Laplacian}
\subjclass[2010]{35J60,35Q55,35S05}
\dedicatory{Dedicated to Francesca}
\begin{document}

\begin{abstract}
Under different assumptions on the potential functions $b$ and $c$, we study the fractional equation $\left( I-\Delta \right)^{\alpha} u = \lambda b(x) |u|^{p-2}u+c(x)|u|^{q-2}u$ in $\mathbb{R}^N$. Our existence results are based on compact embedding properties for weighted spaces.
\end{abstract}
\maketitle

\section{Introduction}

In this paper we provide some existence results for a class of nonlinear fractional equations of the form
\begin{equation} \label{eq:1}
\left( I-\Delta \right)^{\alpha} u = \lambda b(x) |u|^{p-2}u+c(x)|u|^{q-2}u \quad \text{in $\mathbb{R}^N$}
\end{equation}
where $0<\alpha <1$, $p$ and $q$ belong to the interval $(0,2_\alpha^*)$ with $2_\alpha^*=2N/(N-2\alpha)$. 
We will assume throughout that
\begin{description}
 \item[(H)] The potential
functions $b$ and $c$ are continuous and bounded. 
\end{description}
Equations involving the nonlocal operator $(I-\Delta)^\alpha$ arise in the study of standing waves $\psi=\psi(t,x)$ for Schr\"{o}dinger--Klein--Gordon equations of the form
\begin{equation*}
  \mathrm{i} \frac{\partial \psi}{\partial t} = 	(I-\Delta)^\alpha \psi - f(x,\psi), \quad (t,x) \in \mathbb{R} \times \mathbb{R}^N
\end{equation*}
which describe the behavior of bosons. We refer to \cites{Laskin1, Laskin2} for a physical introduction to these fractional equations.

The case $\alpha=1$ corresponds to the classical Schr\"{o}dinger equation $-\Delta u + u = f(x,u)$, 
and we cannot review the huge literature here. 
In the last years the so-called fractional Laplacian $(-\Delta)^\alpha$ has become 
popular in the community of Nonlinear Analysts: we refer the interested reader to the 
recent survey \cite{Bucur} and to the references therein. 

The most physically relevant fractional case is $\alpha=1/2$, and the corresponding Bessel operator, 
or more precisely the operator $\sqrt{-\Delta+m^2}-m$, goes under the name of \emph{relativistic 
Schr\"{o}dinger operator}.

It is interesting to quote a sentence from \cite{Carmona}*{page 119}:
\begin{quote}
One of the fundamental mathematical problems in proving the stability
(or instability) of matter is to estimate the infimum of the spectrum of the
operator $H=H_0+V$ when the numbers $N$ and/or $M$ become large. In this
asymptotic regime, the free Hamiltonian $H_0 = F(p)=\sqrt{-\Delta+m^2}-m$ can be abandoned and replaced by $H_0=|p|$. Indeed, the difference between these two operators remains bounded and the asymptotic result needed to prove
the stability of matter can be proved using either one of these free
Hamiltonians. Obviously, the scaling properties of the function $F(p) = |p|$
attracted early investigations of the corresponding pseudo-differential
operator $H_0$ especially because its scaling is related to the scaling of the
Coulomb potential. Also, very fine estimates on its Green's function are
available.  These are the technical reasons why $F(p) = |p|$ is preferred to $\sqrt{p^2+m^2}-m$.
\end{quote}

We will come back to the issue of scaling properties in the last Section. R. Frank \emph{et al.} in \cite{Frank}*{page 5} promise to extend part of their spectral theory for the fractional Laplacian to the pseudodifferential operator $(-\Delta+m^2)^{\alpha}$.

\medskip

The equation $\sqrt{I-\Delta}\ u = f(u)$ was studied in \cite{Tan} by means of a Dirichlet-to-Neumann 
local realization that was extended to any $\alpha \in (0,1)$ by Fall \emph{et al.} in \cite{Fall}. The case \(\alpha=1/2\) was also studied in \cites{CingolaniSecchi1,CingolaniSecchi2,CZN1,CZN2} 
with a non-local convolutions term on the right-hand side.

On the contrary, very few papers deal with equation (\ref{eq:1}) for arbitrary~$\alpha \in (0,1)$. 
The very recent paper \cite{FelmerVergara} deals with the case in which the nonlinearity 
$f=f(x,u)$ is essentially of the form $\bar{f}(u)+a(x)(|u|+|u|^p)$ with $\lim_{|x| \to +\infty} a(x)=0$, 
in the spirit of \cite{Rabinowitz}.

In this paper we limit ourselves to a somehow particular class of nonlinearities as in (\ref{eq:1}), and we provide a few existence results under different assumptions on the two potential functions $b$ and $c$. 

Section \ref{sec:2} contains some preliminaries on the functional setting that we use to solve (\ref{eq:1}). 
 In Section \ref{sec:3} we begin with the case $p>2$, $q>2$, $b =0$ identically and $c$ positive and ``vanishing at infinity'' in an appropriate sense. In Section \ref{sec:4} we allow $b$ to tend to a positive constant at infinity, and in Section \ref{sec:5} we consider the concave-convex case $1<p<2<q$ with sign-changing potentials $b$ and $c$.

\bigskip

\noindent \textbf{Notation.}
\begin{enumerate}
 \item The letter $C$ will stand for a generic positive constant that
   may vary from line to line.
 \item The symbol $\|\cdot \|_p$ will be reserved for the norm in
   $L^p(\mathbb{R}^N)$.
 \item The operator $D$ will be reserved for the (Fr\'{e}chet)
   derivative, also for functions of a single real variable.
 \item The symbol $\mathcal{L}^N$ will be reserved for the Lebesgue
   $N$-dimensional measure.
 \item The symbol $\complement A$ will denote the complement of the
   subset $A$ (usually in $\mathbb{R}^N$).
 \item $\mathbb{R}_{+}^{N+1} = \left\{ (x,y) \in \mathbb{R}^N \times [0,+\infty) \right\}$
 \item For a real-valued function $f$, we set $f^{+}=\max\{f,0\}$, the positive part of $f$. The negative part of $f$ is defined similarly.
 \item The Fourier transform of a function $f$ will be denoted by $\mathcal{F}u$.
\end{enumerate}

\section{Preliminaries and functional setting} \label{sec:2}
For $\alpha>0$ we introduce the \emph{Bessel function space}
 \[
  L^{\alpha,2}(\mathbb{R}^N) = \left\{ f \colon f=G_\alpha * g
  \ \text{for some $g \in L^2(\mathbb{R}^N)$} \right\},
 \]
where the Bessel convolution kernel is defined by
\begin{equation} \label{eq:G}
G_\alpha (x) = 
\frac{1}{(4 \pi )^{\alpha /2}\Gamma(\alpha/2)} \int_0^\infty \exp \left( -\frac{\pi}{t} |x|^2 \right) \exp \left( -\frac{t}{4\pi} \right) t^{\frac{\alpha - N}{2}-1} \, dt
\end{equation}
The norm of this Bessel space is $\|f\| = \|g\|_2$ if $f=G_\alpha *
g$. The operator $(I-\Delta)^{-\alpha} u = G_{2\alpha} *u$ is usually
called Bessel operator of order $\alpha$.

In Fourier variables the same operator reads
\begin{equation} \label{eq:19}
 G_\alpha = \mathcal{F}^{-1} \circ \left( \left(1+|\xi|^2 \right)^{-\alpha /2} \circ \mathcal{F} \right),
\end{equation}
so that
\[
 \|f\| = \left\| (I-\Delta)^{\alpha /2} f \right\|_2.
\]
For more detailed information, see \cites{Adams, Stein} and the references therein.
\begin{remark}
In the paper \cite{Fall} the pointwise formula
\begin{equation} \label{eq:21}
(I-\Delta)^\alpha u(x) = 
c_{N,\alpha} \operatorname{P.V.} \int_{\mathbb{R}^N} \frac{u(x)-u(y)}{|x-y|^{\frac{N+2\alpha}{2}}} K_{\frac{N+2\alpha}{2}}(|x-y|) \, dy + u(x)
\end{equation}
was derived for functions $u \in C_c^2(\mathbb{R}^N)$. 
Here $c_{N,\alpha}$ is a positive constant depending only on $N$ and $\alpha$, 
P.V. denotes the principal value of the singular integral, and $K_\nu$ is the modified Bessel 
function of the second kind with order $\nu$ (see \cite{Fall}*{Remark 7.3} for more details). 
Since a closed formula for $K_\nu$ is unknown, equation (\ref{eq:21}) is not particularly useful 
for our purposes.
\end{remark}
We recall the embedding properties of Bessel spaces (see \cites{Felmer,Stein,Strichartz}).

\begin{theorem} 
\label{th:1}
\begin{enumerate}
\item $L^{\alpha,2}(\mathbb{R}^N) = W^{\alpha,2}(\mathbb{R}^N) =
  H^\alpha (\mathbb{R}^N)$.
\item If $\alpha \geq 0$ and $2 \leq q \leq 2_\alpha^*=2N/(N-2\alpha)$, then
  $L^{\alpha,2}(\mathbb{R}^N)$ is continuously embedded into $L^q(\mathbb{R}^N)$; if $2 \leq q < 2_\alpha^*$ then the embedding is locally compact.
\item Assume that $0 \leq \alpha \leq 2$ and $\alpha > N/2$. If
  $\alpha -N/2 >1$ and $0< \mu \leq \alpha - N/2-1$, then
  $L^{\alpha,2}(\mathbb{R}^N)$ is continuously embedded into
  $C^{1,\mu}(\mathbb{R}^N)$. If $\alpha -N/2 <1$ and $0 < \mu \leq
  \alpha -N/2$, then $L^{\alpha,2}(\mathbb{R}^N)$ is continuously
  embedded into $C^{0,\mu}(\mathbb{R}^N)$.
\end{enumerate}
\end{theorem}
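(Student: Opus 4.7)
I would handle the three claims separately, using the Fourier representation (\ref{eq:19}) for (1) and (2) and direct pointwise estimates on the kernel (\ref{eq:G}) for (3). For (1) I pass to Fourier variables: by (\ref{eq:19}) one has $\mathcal{F}G_\alpha(\xi) = (1+|\xi|^2)^{-\alpha/2}$, so $f = G_\alpha * g$ with $g \in L^2(\mathbb{R}^N)$ is equivalent to $(1+|\xi|^2)^{\alpha/2}\mathcal{F}f \in L^2(\mathbb{R}^N)$, and by Plancherel $\|g\|_2 = \|(1+|\xi|^2)^{\alpha/2}\mathcal{F}f\|_2$. This is precisely the Fourier definition of the Sobolev space $H^\alpha(\mathbb{R}^N)=W^{\alpha,2}(\mathbb{R}^N)$, yielding (1) with equality of norms.

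For (2), the continuous embedding into $L^{2_\alpha^*}(\mathbb{R}^N)$ follows from the Hardy--Littlewood--Sobolev inequality applied to the Riesz potential: splitting $G_\alpha$ into its singular part, which behaves like $|x|^{\alpha-N}$ near the origin, and an exponentially decaying tail, HLS controls the first piece and Young's inequality the second. Interpolation with the trivial embedding into $L^2$ extends this to the full range $2 \le q \le 2_\alpha^*$. For local compactness on a ball $B_R$ when $2 \le q < 2_\alpha^*$, I would invoke the Kolmogorov--Riesz--Fr\'echet criterion: the estimate $\|\tau_h f-f\|_2 \le C|h|^\alpha \|f\|$ is immediate from (\ref{eq:19}) via Plancherel (using $|e^{ih\cdot\xi}-1|^2 \le C|h|^{2\alpha}(1+|\xi|^2)^\alpha$), so a bounded family in $L^{\alpha,2}(\mathbb{R}^N)$ is precompact in $L^2(B_R)$ after a standard cutoff argument, and interpolation with the continuous embedding into $L^{2_\alpha^*}$ promotes this to $L^q(B_R)$ for $q<2_\alpha^*$.

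For (3), I would exploit the convolution $f = G_\alpha * g$ directly. When $\alpha > N/2$ the representation (\ref{eq:G}) shows $G_\alpha \in L^2(\mathbb{R}^N)$: the integrand is square-integrable at the origin by the exponential factor $\exp(-\pi|x|^2/t)$ and at infinity by $\exp(-t/4\pi)$. Hence $f$ is continuous and bounded by Cauchy--Schwarz. A H\"older modulus of continuity of $f$ is then inherited from the $L^2$ modulus of continuity of $G_\alpha$, which in turn is extracted from a careful splitting of (\ref{eq:G}) according to whether $t$ is small or large; this produces the exponent $\mu = \alpha - N/2$ in the regime $\alpha - N/2 < 1$. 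In the range $\alpha - N/2 > 1$, one differentiates under the convolution integral and applies the previous argument to $\nabla G_\alpha$, reducing the $C^{1,\mu}$ case to the $C^{0,\mu}$ case with exponent shifted by one. I expect (3) to be the main technical obstacle, since the sharp H\"older exponents require delicate bookkeeping of the integrals in (\ref{eq:G}); claims (1) and (2) are essentially Fourier-side translations of classical facts about $H^\alpha(\mathbb{R}^N)$.
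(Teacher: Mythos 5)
The paper offers no proof of this theorem at all: it is recalled from the literature with the citations to Stein, Strichartz and Felmer--Quaas--Tan, so any self-contained argument is by definition a different route. Your sketch is essentially sound and follows the standard lines. For (1), the Plancherel identity $\|g\|_2=\|(1+|\xi|^2)^{\alpha/2}\mathcal{F}f\|_2$ does identify $L^{\alpha,2}$ with the Fourier-defined space $H^\alpha$, but note that the equality with $W^{\alpha,2}$ (defined via the Gagliardo seminorm) is a separate, classical computation that your one line does not cover; it deserves at least a reference. For (2), the HLS-plus-Young splitting of $G_\alpha$ and the Kolmogorov--Riesz argument are correct; the only caveat is that the translation estimate $\|\tau_h f-f\|_2\le C|h|^\alpha\|f\|$ holds as written only for $0\le\alpha\le 1$ (for larger $\alpha$ replace the exponent by $\min\{\alpha,1\}$, which is all one needs for equicontinuity). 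For (3), your plan works, but the physical-space ``careful splitting of (\ref{eq:G}) in $t$'' is the hard way; it is cleaner to stay on the Fourier side and bound
\begin{equation*}
\|\tau_h G_\alpha-G_\alpha\|_2^2=\int_{\mathbb{R}^N}\bigl|e^{\mathrm{i}h\cdot\xi}-1\bigr|^2\,(1+|\xi|^2)^{-\alpha}\,d\xi\le C|h|^{2(\alpha-N/2)}
\end{equation*}
by splitting at $|\xi|=|h|^{-1}$, which immediately gives the $C^{0,\mu}$ modulus with $\mu=\alpha-N/2$ via Cauchy--Schwarz against $g$, and the $C^{1,\mu}$ case by the same argument applied to $\nabla G_\alpha$ (which lies in $L^2$ precisely when $\alpha>N/2+1$). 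In short: your proposal is a correct if somewhat laborious reconstruction of facts the paper simply imports; what it buys is self-containedness, at the cost of re-proving standard potential-theoretic estimates.
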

\begin{remark}
Although the Bessel space $L^{\alpha,2}(\mathbb{R}^N)$ is topologically undistinguishable from the Sobolev fractional space $H^\alpha(\mathbb{R}^N)$, we will not confuse them, since our equation involves the Bessel norm.
\end{remark}
We collect here a couple of technical lemmas taken from the remarkable paper \cite{Palatucci}.

\begin{lemma} \label{lem:1}
	Let $0<\alpha < N/2$ and let $u \in
        L^{\alpha,2}(\mathbb{R}^N)$. Let $\varphi \in C_0^\infty
        (\mathbb{R}^N)$ and for each $R>0$ let
        $\varphi_R(x)=\varphi(R^{-1} x)$. Then $\lim_{R \rightarrow 0}
        \varphi_R u =0$ in $L^{\alpha,2}(\mathbb{R}^N)$. If, in
        addition, $\varphi$ equals one in a neighborhood of the
        origin, then $\lim_{R \rightarrow +\infty} \varphi_R u =0$ in
        $L^{\alpha,2}(\mathbb{R}^N)$.
\end{lemma}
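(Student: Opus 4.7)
My plan is to combine density with a Kato--Ponce-type fractional Leibniz estimate, and then handle the smooth case by direct computation.

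\emph{Density reduction.} I would use the fractional Leibniz inequality
\[
\|(I-\Delta)^{\alpha/2}(\varphi_R w)\|_2 \le C\bigl(\|\varphi\|_\infty \|(I-\Delta)^{\alpha/2} w\|_2 + \|(I-\Delta)^{\alpha/2}\varphi_R\|_{N/\alpha}\|w\|_{2_\alpha^*}\bigr).
\]
The choice of exponent $N/\alpha$ is critical: the factor $\|(I-\Delta)^{\alpha/2}\varphi_R\|_{N/\alpha}$ is scale-invariant under $\varphi_R(x)=\varphi(x/R)$, so together with the Sobolev embedding $L^{\alpha,2}\hookrightarrow L^{2_\alpha^*}$ of Theorem~\ref{th:1}(2) one obtains a uniform bound $\|\varphi_R w\|_{L^{\alpha,2}}\le M\|w\|_{L^{\alpha,2}}$, valid for every $R>0$. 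This reduces both assertions to $u\in C_c^\infty(\mathbb{R}^N)$.

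\emph{Part 1 ($R\to 0$).} For smooth $u$, $\varphi_R u$ is supported in $B_{R r_0}$. I would decompose $u(x)=u(0)+(u(x)-u(0))$ locally. By Fourier-side scaling, $\|\varphi_R\|_{L^{\alpha,2}}^2\lesssim R^N+R^{N-2\alpha}\to 0$ (using $\alpha<N/2$), which handles the leading $u(0)\varphi_R$ term. The residual $\varphi_R(u-u(0))$ gains an extra factor of $R$ from the Lipschitz bound $|u(x)-u(0)|\le \|\nabla u\|_\infty|x|\le C R$ on the support. Equivalently, in the Gagliardo regime $\alpha<1$ one may split $\varphi_R(x)u(x)-\varphi_R(y)u(y)=\varphi_R(x)[u(x)-u(y)]+u(y)[\varphi_R(x)-\varphi_R(y)]$: the first piece vanishes by dominated convergence, and the second by the scaling $\int|\varphi_R(x)-\varphi_R(y)|^2/|x-y|^{N+2\alpha}\,dx = R^{-2\alpha}K_1(y/R)$ of the inner kernel, with $K_1\in L^1\cap L^\infty$.

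\emph{Part 2 ($R\to +\infty$).} The stated conclusion cannot hold literally: under the hypothesis $\varphi\equiv 1$ near the origin one has $\varphi_R(x)\to 1$ pointwise for every $x$, hence $\varphi_R u\to u$ rather than $0$. I would read the claim as $(1-\varphi_R)u\to 0$ in $L^{\alpha,2}$ (equivalently $\varphi_R u\to u$). With this correction $(1-\varphi_R)u$ is supported in the outer tail $\{|x|\ge\epsilon R\}$, and the same Kato--Ponce and Gagliardo estimates apply, now relying on the tail-vanishing $\int_{|x|\ge\epsilon R}|u|^{2_\alpha^*}\to 0$.

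\emph{Main obstacle.} The technical heart is calibrating the Kato--Ponce inequality to the critical Sobolev pair $(N/\alpha,2_\alpha^*)$: only at this pair does the constant $\|(I-\Delta)^{\alpha/2}\varphi_R\|_{N/\alpha}$ become $R$-independent, enabling the density step to close; and it is precisely here that the assumption $\alpha<N/2$ is indispensable, entering through both the embedding and the scaling.
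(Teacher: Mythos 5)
The paper does not actually prove this lemma: it observes that $L^{\alpha,2}(\mathbb{R}^N)=H^{\alpha}(\mathbb{R}^N)$ and defers entirely to \cite{Palatucci}*{Lemma 5}, so your self-contained argument is necessarily a different route. Before pointing out the gap, a word in your favour: your reading of the second assertion is the right one. Since $\varphi\equiv 1$ near the origin forces $\varphi_R\to 1$ pointwise as $R\to+\infty$, the conclusion as printed would force $u=0$; the cited lemma of Palatucci--Pisante indeed asserts $(1-\chi_\rho)u\to 0$, and that is also the form in which the lemma is invoked later in the paper, where the statement is applied to $\tilde{\varphi}_R=1-\varphi_R$. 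You have caught a typo, not introduced an error.

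The gap is in the step you yourself single out as the technical heart. The factor $\|(I-\Delta)^{\alpha/2}\varphi_R\|_{N/\alpha}$ is \emph{not} scale-invariant: by the standard equivalence $\|(I-\Delta)^{\alpha/2}f\|_p\sim \|f\|_p+\|(-\Delta)^{\alpha/2}f\|_p$ it behaves like $R^{\alpha}\|\varphi\|_{N/\alpha}+\|(-\Delta)^{\alpha/2}\varphi\|_{N/\alpha}$, which is bounded as $R\to 0$ but blows up like $R^{\alpha}$ as $R\to+\infty$. Only the \emph{homogeneous} quantity $\|(-\Delta)^{\alpha/2}\varphi_R\|_{N/\alpha}$ is $R$-independent, so your claimed uniform bound $\|\varphi_R w\|_{L^{\alpha,2}}\le M\|w\|_{L^{\alpha,2}}$ ``valid for every $R>0$'' fails exactly in the regime needed for the second assertion. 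The repair is routine but must be made: run the Kato--Ponce estimate for $(-\Delta)^{\alpha/2}$ with the pair $(N/\alpha,2_\alpha^*)$, control the zero-order piece separately by $\|(1-\varphi_R)w\|_2\le (1+\|\varphi\|_\infty)\|w\|_2$, and recombine through the norm equivalence; with that change the density reduction closes in both regimes, and for $w\in C_c^\infty(\mathbb{R}^N)$ the second assertion becomes trivial since $(1-\varphi_R)w\equiv 0$ for $R$ large. The remaining computations --- the bound $\|\varphi_R\|_{L^{\alpha,2}}^2\lesssim R^N+R^{N-2\alpha}$, which is where $\alpha<N/2$ enters, and the Gagliardo splitting, legitimate here because $\alpha<1$ --- are correct.
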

\begin{proof}
  Since $L^{\alpha,2}(\mathbb{R}^N)$ is equivalent to
  $H^s(\mathbb{R}^N)$, the proof of \cite{Palatucci}*{Lemma 5} carries
  over with only minor modifications.
\end{proof}

\begin{lemma} \label{lem:2}
	Let $0<\alpha<N/2$, let $\Omega$ be a bounded open subset of
        $\mathbb{R}^N$ and let $\varphi \in
        C_0^\infty(\mathbb{R}^N)$. Then the commutator
        $[\varphi,(I-\Delta)^{\alpha/2}] \colon
        L^{\alpha,2}(\mathbb{R}^N) \to L^2(\mathbb{R}^N)$ is a compact
        operator.
\end{lemma}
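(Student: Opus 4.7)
The plan is to adapt the argument of \cite{Palatucci}*{Lemma 6}, which treats the pure fractional Laplacian, to the Bessel operator $(I-\Delta)^{\alpha/2}$. The mass term is actually an asset, since it makes the Bessel kernel $G_\beta$ decay exponentially at infinity, whereas the Riesz kernel only decays polynomially. The key structural fact I would prove is that $[\varphi,(I-\Delta)^{\alpha/2}]$ is a pseudodifferential operator of order $\alpha-1$, one derivative better than $(I-\Delta)^{\alpha/2}$ itself.

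To make this precise, I would use \eqref{eq:19} to write
\begin{equation*}
  \widehat{[\varphi,(I-\Delta)^{\alpha/2}]u}(\xi) = \int_{\mathbb{R}^N} \hat\varphi(\xi-\eta)\bigl[(1+|\eta|^2)^{\alpha/2} - (1+|\xi|^2)^{\alpha/2}\bigr]\hat u(\eta)\,d\eta,
\end{equation*}
bound the bracket via the mean value theorem by $C|\xi-\eta|(1+|\xi|^2+|\eta|^2)^{(\alpha-1)/2}$, and then invoke a Schur-type estimate that exploits the rapid decay of $\hat\varphi$ to conclude
\begin{equation*}
  \bigl\|[\varphi,(I-\Delta)^{\alpha/2}]u\bigr\|_{H^1(\mathbb{R}^N)} \le C\|u\|_{L^{\alpha,2}(\mathbb{R}^N)}.
\end{equation*}
To upgrade this boundedness into compactness I would establish a uniform tail estimate. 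The summand $\varphi(I-\Delta)^{\alpha/2}u$ is supported in $\operatorname{supp}\varphi$, while the other summand $(I-\Delta)^{\alpha/2}(\varphi u)=(I-\Delta)[G_{2-\alpha}*(\varphi u)]$ decays exponentially away from $\operatorname{supp}\varphi$, as the explicit integral representation \eqref{eq:G} makes transparent. Hence for every $\varepsilon>0$ there is $R>0$ such that $\|[\varphi,(I-\Delta)^{\alpha/2}]u\|_{L^2(\mathbb{R}^N\setminus B_R)} \le \varepsilon\|u\|_{L^{\alpha,2}}$, uniformly in $u$.

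Given a bounded sequence $(u_n)$ in $L^{\alpha,2}(\mathbb{R}^N)$, the images $[\varphi,(I-\Delta)^{\alpha/2}]u_n$ are then bounded in $H^1(\mathbb{R}^N)$ and hence relatively compact in $L^2(B_R)$ by Rellich–Kondrachov; a diagonal extraction together with the uniform tail estimate lifts this to strong convergence in $L^2(\mathbb{R}^N)$, yielding the claimed compactness. The main obstacle is the symbol estimate: when $\alpha<1$ the factor $(1+|\xi|^2+|\eta|^2)^{(\alpha-1)/2}$ is a \emph{negative} power of the combined frequencies, and one has to integrate it carefully against $\hat\varphi(\xi-\eta)$ in order to verify Schur's test uniformly in both the row and the column variable. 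By contrast, the tail estimate, which for the Riesz potential requires a delicate truncation argument, is almost automatic here thanks to the exponential decay of $G_\beta$ inherited from the mass term.
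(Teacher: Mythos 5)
Your argument is correct and is essentially the proof the paper has in mind: the paper simply asserts that the commutator argument of \cite{Palatucci}*{Lemma 6} for $(-\Delta)^{s/2}$ carries over to the Bessel operator, and your symbol estimate $\bigl|(1+|\eta|^2)^{\alpha/2}-(1+|\xi|^2)^{\alpha/2}\bigr|\le C|\xi-\eta|(1+|\xi|^2+|\eta|^2)^{(\alpha-1)/2}$, the Schur test giving boundedness into $H^1$, and the exponential tail bound coming from the decay of the Bessel kernel are exactly the modifications needed to make that transfer explicit. No gaps.
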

\begin{proof}
It suffices to remark that the proof of \cite{Palatucci}*{Lemma 6} actually contains a proof of our statement.
\end{proof}

\begin{definition}
 We say that $u \in L^{\alpha,2}(\mathbb{R}^N)$ is a weak solution to (\ref{eq:1}) if
\[
  \int_{\mathbb{R}^N} (I-\Delta)^{\alpha /2} u \ (I-\Delta)^{\alpha/2} v \, dx = \int_{\mathbb{R}^N} b(x)|u|^{p-2}uv \, dx + \int_{\mathbb{R}^N} c(x) |u|^{q-2}uv \, dx
\]
for all $v \in L^{\alpha,2}(\mathbb{R}^N)$, or, equivalently,
\[
  \int_{\mathbb{R}^N} (1+|\xi|^2)^\alpha \mathcal{F}u(\xi) \mathcal{F}v(\xi) \, d\xi = \int_{\mathbb{R}^N} b(x)|u|^{p-2}uv \, dx + \int_{\mathbb{R}^N} c(x) |u|^{q-2}uv \, dx
\]
\end{definition}
It is readily seen that weak solutions to (\ref{eq:1}) correspond to critical points of the differentiable functional $J \colon L^{\alpha,2}(\mathbb{R}^N) \to \mathbb{R}$ defined by
\[
J(u)=\frac{1}{2} \|(I-\Delta)^{\alpha /2}u\|_2^2 - \frac{\lambda}{p} \int_{\mathbb{R}^N} b(x) |u|^p\, dx - \frac{1}{q} \int_{\mathbb{R}^N} c(x)|u|^q \, dx
\]

\section{Potentials vanishing at infinity} \label{sec:3}

In this section with treat the particular case of equation (\ref{eq:1}) with $b \equiv 0$, or
$\lambda=0$. We can actually deal with a more
general model,
\begin{equation} \label{eq:2}
 (I-\Delta)^\alpha u = c(x) f(u) \quad\text{in $\mathbb{R}^N$}
\end{equation}

\begin{definition}
We say that a function $c \in L^\infty(\mathbb{R}^N)$ satisfies the compactness condition (K) if 
\[
 \lim_{r \to +\infty} \int_{A_n \cap \complement B(0,r)} c(x)\, dx =0
 \] 
 uniformly with respect to $n$ whenever $\{A_n\}_n$ is a sequence of Borel sets such that 
 \[
 \sup_n \mathcal{L}^N (A_n)<\infty.
 \]
\end{definition}
Following \cite{Alves}, we assume
\begin{description}
 \item[(c1)] $c>0$ everywhere.
 \item[(c2)] $c$ satisfies the condition (K).
\end{description}
Our assumptions on the nonlinearity $f$ read as follows: $f$ is continuous and
\begin{description}
 \item[(f1)] $\limsup_{s \to 0} f(s)/s =0$.
 \item[(f2)] $\limsup_{s \to +\infty} f(s)/s^{2_\alpha^*-1}=0$.
 \item[(f3)] The map $s \mapsto s^{-1}f(s)$ is increasing and the primitive $F$ of $f$ satisfies $\lim_{s \to +\infty}
 F(s)/s^2=+\infty$.
\end{description}
\begin{remark} \label{rem:3}
Since $G_\alpha>0$, it follows easily that $(I-\Delta)^\alpha$
satisfies a maximum principle. Therefore, we will look for positive solutions and assume that $f(s)=0$
for every $s \leq 0$.
\end{remark}

The Euler functional associated to (\ref{eq:2}) is $J \colon
L^{\alpha,2}(\mathbb{R}^N) \to \mathbb{R}$, defined by
\begin{equation*}
 J(u) = \frac{1}{2} \|(I-\Delta)^{\alpha/2}\|_2^2 -
 \int_{\mathbb{R}^N} c(x) F(u)\, dx.
\end{equation*}
The following lemma can be proved by standard techniques.
\begin{lemma}
 The functional $J$ is of class $C^1$ has the mountain-pass geometry.
\end{lemma}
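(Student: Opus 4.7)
The plan is to verify the two parts separately: $C^1$ regularity first, then the mountain-pass geometry, working in the equivalent norm $\|u\|=\|(I-\Delta)^{\alpha/2}u\|_2$.

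For the $C^1$ part, I would handle the quadratic term $Q(u)=\frac{1}{2}\|(I-\Delta)^{\alpha/2}u\|_2^2$ trivially: it is a continuous quadratic form and therefore smooth with $\langle DQ(u),v\rangle=\langle(I-\Delta)^{\alpha/2}u,(I-\Delta)^{\alpha/2}v\rangle_2$. The real work is the Nemytskii-type functional $\Phi(u)=\int_{\mathbb{R}^N}c(x)F(u)\,dx$. First, hypotheses (f1) and (f2) imply that for every $\varepsilon>0$ there exists $C_\varepsilon>0$ with
\[
|f(s)|\le \varepsilon|s|+C_\varepsilon|s|^{2_\alpha^*-1},\qquad |F(s)|\le \tfrac{\varepsilon}{2} s^2+\tfrac{C_\varepsilon}{2_\alpha^*}|s|^{2_\alpha^*}.
\]
Combined with $c\in L^\infty$ and the continuous embeddings $L^{\alpha,2}(\mathbb{R}^N)\hookrightarrow L^2(\mathbb{R}^N)\cap L^{2_\alpha^*}(\mathbb{R}^N)$ from Theorem \ref{th:1}(2), these bounds show that $\Phi$ is well defined on $L^{\alpha,2}$. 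A standard argument based on the mean value theorem, Hölder's inequality in the exponents $2$ and $2_\alpha^*$, and continuity of the Nemytskii operator $u\mapsto f(u)$ between $L^{\alpha,2}(\mathbb{R}^N)$ and $L^2(\mathbb{R}^N)+L^{2_\alpha^*/(2_\alpha^*-1)}(\mathbb{R}^N)$ shows that $\Phi$ is Fréchet differentiable with continuous derivative $\langle D\Phi(u),v\rangle=\int_{\mathbb{R}^N}c(x)f(u)v\,dx$.

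For the mountain-pass geometry, note that $J(0)=0$, and that using the estimate for $F$ above together with the Sobolev embeddings one has
\[
J(u)\ge \tfrac{1}{2}\|u\|^2-\tfrac{\varepsilon}{2}\|c\|_\infty\|u\|_2^2-\tfrac{C_\varepsilon}{2_\alpha^*}\|c\|_\infty\|u\|_{2_\alpha^*}^{2_\alpha^*}\ge \bigl(\tfrac{1}{2}-C\varepsilon\bigr)\|u\|^2-C'\|u\|^{2_\alpha^*}.
\]
Choosing $\varepsilon$ small enough and then $\rho>0$ small enough, this gives $J(u)\ge \beta>0$ whenever $\|u\|=\rho$, providing the ``ring'' of positivity around the origin.

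For the descent condition I would exploit the superquadraticity in (f3). Fix any nonnegative $u_0\in C_c^\infty(\mathbb{R}^N)\setminus\{0\}$, which by (c1) satisfies $\int c(x)u_0^2\,dx>0$. Then
\[
J(tu_0)=\tfrac{t^2}{2}\|u_0\|^2-\int_{\mathbb{R}^N}c(x)F(tu_0)\,dx,\qquad t>0.
\]
Dividing by $t^2$ and applying Fatou's lemma together with $\lim_{s\to+\infty}F(s)/s^2=+\infty$ yields
\[
\liminf_{t\to+\infty}\frac{1}{t^2}\int_{\mathbb{R}^N}c(x)F(tu_0)\,dx\ge \int_{\{u_0>0\}} c(x)u_0^2(x)\cdot \liminf_{t\to+\infty}\frac{F(tu_0(x))}{t^2u_0(x)^2}\,dx=+\infty,
\]
so $J(tu_0)\to-\infty$; any sufficiently large $e=tu_0$ with $\|e\|>\rho$ and $J(e)<0$ completes the geometry.

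The main obstacle, and the only step that is not purely formal, is the $C^1$ regularity of $\Phi$: one must make sure that the critical-growth term $|s|^{2_\alpha^*-1}$ interacts correctly with the Sobolev exponent. Hypothesis (f2), which forces strict subcriticality at infinity, is exactly what guarantees continuity of the derivative via a dominated convergence argument. Everything else is routine.
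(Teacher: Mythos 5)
Your proposal is correct and is precisely the ``standard techniques'' argument that the paper invokes without giving any details: growth estimates from (f1)--(f2) plus the Sobolev embedding of Theorem \ref{th:1} for the $C^1$ regularity and the positive ring around the origin, and superquadraticity of $F$ from (f3) together with Fatou's lemma (legitimate here since $f\geq 0$, hence $F\geq 0$) for the descent direction. Nothing is missing.
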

As a consequence, there exists (see \cite{Cerami}) a Cerami sequence $\{u_n\}_n \subset
L^{\alpha,2}(\mathbb{R}^N)$ for $J$, i.e. a sequence such that
\begin{equation} \label{eq:6}
 \lim_{n \to +\infty} J(u_n)=c \quad\text{and}\quad \lim_{n \to
   +\infty} (1+\|u_n\|)DJ(u_n)=0,
\end{equation}
where
\[
 c_{\mathrm{mp}} = \inf_{\gamma \in \Gamma} \sup_{0 \leq t \leq 1} J(\gamma(t))
\]
and
\[
 \Gamma = \left\{ \gamma \in C([0,1],L^{\alpha,2}\mathbb{R}^N) \mid \gamma (0)=0, \ J(\gamma(1))<0 \right\}.
\]
In view of Remark \ref{rem:3}, we may assume that $u_n \geq 0$ for every $n \in \mathbb{N}$.
%
\begin{definition}
For a positive and measurable function $w \colon \mathbb{R}^N \to [0,+\infty)$  we denote
\[
   L^q(w\, d\mathcal{L}^N) = \left\{ u\colon \mathbb{R}^N \to \mathbb{R} \mid \int |u(x)|^q w(x)\, dx <\infty \right\}               
\]
\end{definition}
\begin{proposition} \label{prop:1}
 The space $L^{\alpha,2}(\mathbb{R}^N)$ is compactly embedded into $L^q(c\, d\mathcal{L}^N)$ for every $2<q<2_\alpha^*$.
\end{proposition}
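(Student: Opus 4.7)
I plan to show that every bounded sequence $\{u_n\}$ in $L^{\alpha,2}(\mathbb{R}^N)$ with $u_n \rightharpoonup u$ weakly converges to $u$ strongly in $L^q(c\, d\mathcal{L}^N)$. Setting $v_n = u_n - u$, which is bounded in $L^{\alpha,2}$ and tends weakly to zero, the claim reduces to $\int_{\mathbb{R}^N} |v_n|^q c\, dx \to 0$. By Theorem~\ref{th:1}(2), $\{v_n\}$ is uniformly bounded in every $L^s(\mathbb{R}^N)$ with $s \in [2, 2_\alpha^*]$, and it converges to zero in $L^s_{\mathrm{loc}}(\mathbb{R}^N)$ for $s < 2_\alpha^*$; these are the only Sobolev facts I will need.

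The approach is a double truncation---one spatial, one by amplitude. For $R, r > 0$ I would write
\[
\int_{\mathbb{R}^N} |v_n|^q c\, dx = \underbrace{\int_{B(0,R)} |v_n|^q c\, dx}_{(\mathrm{I})} + \underbrace{\int_{\complement B(0,R)\cap\{|v_n|\le r\}} |v_n|^q c\, dx}_{(\mathrm{II})} + \underbrace{\int_{\complement B(0,R)\cap\{|v_n|>r\}} |v_n|^q c\, dx}_{(\mathrm{III})}.
\]
Term $(\mathrm{I})$ tends to zero as $n \to \infty$ for each fixed $R$, since $c \in L^\infty$ and $v_n \to 0$ in $L^q(B(0,R))$. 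In $(\mathrm{II})$ I exploit $q > 2$: on $\{|v_n|\le r\}$ one has $|v_n|^q \le r^{q-2}|v_n|^2$, hence $(\mathrm{II}) \le r^{q-2}\|c\|_\infty \|v_n\|_2^2 \le C r^{q-2}$, which can be made arbitrarily small by choosing $r$ small.

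The crux---and the only place where condition (K) enters---is $(\mathrm{III})$, which I expect to be the main obstacle. The key observation is that Chebyshev's inequality gives $\mathcal{L}^N(\{|v_n|>r\}) \le r^{-2}\|v_n\|_2^2 \le Cr^{-2}$, so for each fixed $r$ the sets $A_n := \{|v_n|>r\}$ have uniformly bounded Lebesgue measure, which is precisely the hypothesis of~(K). H\"older's inequality with conjugate exponents $2_\alpha^*/q$ and $2_\alpha^*/(2_\alpha^*-q)$ gives
\[
(\mathrm{III}) \le \|v_n\|_{2_\alpha^*}^q \left(\int_{\complement B(0,R)\cap A_n} c^{\,2_\alpha^*/(2_\alpha^*-q)}\, dx\right)^{(2_\alpha^*-q)/2_\alpha^*},
\]
and since $c \in L^\infty$ with exponent $2_\alpha^*/(2_\alpha^*-q) \ge 1$, the integrand is bounded pointwise by a constant multiple of $c$. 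Condition~(K) then forces the inner integral to zero as $R \to \infty$, \emph{uniformly in $n$}. The proof concludes with the standard $\varepsilon/3$ scheme: fix $r$ small enough to control $(\mathrm{II})$, then fix $R$ large enough to control $(\mathrm{III})$ uniformly in $n$, and finally send $n \to \infty$ to kill $(\mathrm{I})$. The delicate point is precisely that condition~(K) applies only to sets of uniformly bounded measure, which is why amplitude truncation at the level $r$ is indispensable: one cannot apply~(K) directly to $\complement B(0,R)$, on which $c$ need not be integrable.
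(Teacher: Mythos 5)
Your proof is correct and follows essentially the same strategy as the paper's: an amplitude truncation combined with a Chebyshev-type bound showing that the large-amplitude sets $A_n$ have uniformly bounded Lebesgue measure, so that condition (K) controls the tail, while the local compact embedding handles the bounded region. The only cosmetic difference is that the paper splits amplitudes into three regimes (absorbing the very large ones via $\varepsilon|s|^{2_\alpha^*}$ pointwise) and applies (K) only to a middle band $s_0\le|s|\le s_1$, whereas you use a single cutoff and H\"older against the bounded $L^{2_\alpha^*}$ norm; both work.
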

\begin{proof}
 We fix $q \in (2,2_\alpha^*)$ and pick $\varepsilon>0$. There exist $s_0<s_1$ such that
 \[
  c(x)|s|^q \leq \varepsilon C \left(s^2+|s|^{2_\alpha^*} \right) + Cc(x) \chi_{[s_0,s_1]}(|s|) |s|^{2_\alpha^*}
 \]
for every $s \in \mathbb{R}$. Integrating,
\[
 \int_{\complement B(0,r)} c(x)|u(x)|^q \, dx \leq \varepsilon C Q(u) + C \int_{A \cap \complement B(0,r)}
 c(x)\, dx
\]
for every $u \in L^{\alpha,2}(\mathbb{R}^N)$, where $Q(u)=\|u\|_2^2 +
\|u\|_{2_\alpha^*}^{2_\alpha^*}$ and $A = \left\{ x \in \mathbb{R}^N
\mid s_0 \leq |u(x)| \leq s_1 \right\}$.

If $v_n \to 0$ weakly in $L^{\alpha,2}(\mathbb{R}^N)$, then 
\[
\left \|(I-\Delta)^{\alpha/2}v_n \right\|_2^2 \leq M_1 \quad \text{and} \quad \|v_n\|_{2_\alpha^*}^{2_\alpha^*} \leq M_1
\]
for some $M_1>0$. In particular, the sequence $\{Q(v_n)\}_n$ is bounded in $\mathbb{R}$. On the other hand,
if we set 
\[
 A_n = \left\{ x \in \mathbb{R}^N \mid s_0 \leq |v_n(x)| \leq s_1 \right\},
\]
then 
\[
 s_0^{2_\alpha^*} \mathcal{L}^N(A_n) \leq \int_{A_n} |v_n|^{2_\alpha^*} \, dx \leq M_1
\]
for all $n \in \mathbb{N}$. This shows that $\sup_n \mathcal{L}^N(A_n)<\infty$ and from our assumption on $c$ there exists
a number $r >0$ such that
\[
 \int_{A_n \cap \complement B(0,r)} c(x)\, dx < \frac{\varepsilon}{s_1^{2_\alpha^*}}.
\]
We are ready to conclude that
\[
 \int_{B(0,r)} c(x)|v_n(x)|^q \, dx \leq \varepsilon CM_1 + s_1^{2_\alpha^*} \int_{A_n \cap \complement B(0,r)}
 c(x)\, dx < (CM_1+1)\varepsilon
\]
for all $n \in \mathbb{N}$. From the compact embedding of $L^{\alpha,2}(\mathbb{R}^N)$ into 
$L_{\mathrm{loc}}^q(\mathbb{R}^N)$ and the boundedness of $c$, we know that
\[
 \lim_{n \to +\infty} \int_{B(0,r)}c(x) |v_n(x)|^q \, dx = 0.
\]
Hence $v_n \to 0$ strongly in $L^q(c\, d\mathcal{L}^N)$. The proof is complete.
\end{proof}
\begin{remark}
 If assumption (c2) is replaced by $\lim_{|x| \to +\infty} c(x)=0$, then the compactness of the embedding is
 classical: see for instance \cite{Stuart}*{Lemma 3.2}. A different assumption ensuring the compactness of the weighted embedding appears in \cite{Chabrowski}.
\end{remark}

\begin{proposition} \label{prop:2}
 If $v_n \to v$ weakly in $L^{\alpha,2}(\mathbb{R}^N)$ then
 \begin{equation} \label{eq:3}
  \lim_{n \to +\infty} \int_{\mathbb{R}^N} c(x)F(v_n(x))\, dx = \int_{\mathbb{R}^N} c(x)F(v(x))\, dx
 \end{equation}
and
\begin{equation} \label{eq:4}
 \lim_{n \to +\infty} \int_{\mathbb{R}^N} c(x)f(v_n(x))v_n(x)\, dx = \int_{\mathbb{R}^N} c(x)f(v(x))v(x)\, dx
\end{equation}
\end{proposition}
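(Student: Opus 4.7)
The plan is to apply Vitali's convergence theorem to the two sequences $\{c F(v_n)\}_n$ and $\{c\, v_n f(v_n)\}_n$. The essential ingredients are almost-everywhere convergence of a subsequence, equi-integrability, and tightness at infinity, and the compact embedding from Proposition~\ref{prop:1} is what delivers the last two properties through the subcritical growth of $f$.

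First I would extract a subsequence, not relabelled, with $v_n \to v$ almost everywhere in $\mathbb{R}^N$. This is justified by Theorem~\ref{th:1}(2) applied with exponent $q=2$ on each ball $B(0,k)$, together with a standard diagonal argument; along the way one also notes that $\{v_n\}$ is bounded in $L^2(\mathbb{R}^N)$ and in $L^{2_\alpha^*}(\mathbb{R}^N)$. Next, using the continuity of $f$ together with (f1) and (f2), I would fix $q \in (2, 2_\alpha^*)$ and produce, for every $\varepsilon>0$, a constant $C_\varepsilon>0$ such that
\[
|F(s)| + |s f(s)| \leq \varepsilon \left( s^2 + |s|^{2_\alpha^*} \right) + C_\varepsilon |s|^q \qquad (s \in \mathbb{R}).
\]
Multiplying by $c$ and using $c \in L^\infty$ gives the pointwise estimate
\[
c(x)\,|F(v_n(x))| \leq \varepsilon\, \|c\|_\infty \left( v_n(x)^2 + |v_n(x)|^{2_\alpha^*} \right) + C_\varepsilon\, c(x)\, |v_n(x)|^q,
\]
and similarly for $c\,|v_n f(v_n)|$.

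The core of the argument is to verify equi-integrability and tightness of $\{c F(v_n)\}_n$. By Proposition~\ref{prop:1}, $v_n \to v$ strongly in $L^q(c\,d\mathcal{L}^N)$, hence $c\,|v_n|^q \to c\,|v|^q$ in $L^1(\mathbb{R}^N)$; in particular $\{c\,|v_n|^q\}_n$ is equi-integrable and tight. On the other hand, $\{v_n^2 + |v_n|^{2_\alpha^*}\}_n$ is only bounded in $L^1$ by some $M$, not tight. Given $\delta>0$, choose $\varepsilon$ so small that $\varepsilon\,\|c\|_\infty\,M < \delta/2$, then pick $R>0$ and $\eta>0$ so that the $C_\varepsilon c|v_n|^q$ contribution from $\complement B(0,R)$, and from any measurable set of Lebesgue measure less than $\eta$, is at most $\delta/2$ uniformly in $n$. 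Combining these two bounds yields uniform absolute continuity and tightness of $\{c F(v_n)\}_n$.

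Vitali's theorem now gives (\ref{eq:3}) along the extracted subsequence, and since the limit does not depend on the subsequence, the convergence upgrades to the whole sequence by the usual argument (every subsequence has a further subsequence converging to the same limit). The identical argument with $s f(s)$ in place of $F(s)$ produces (\ref{eq:4}), as both satisfy the same growth bound. The main obstacle is precisely this simultaneous control of equi-integrability and of tightness at infinity: the $\varepsilon(v_n^2+|v_n|^{2_\alpha^*})$ piece is harmless only after $\varepsilon$ is chosen small, while the $L^q(c\,d\mathcal{L}^N)$-convergence (not merely the condition (K)) is what makes the $C_\varepsilon c|v_n|^q$ piece tight uniformly in $n$.
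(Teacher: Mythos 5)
Your proof is correct, and it rests on exactly the two ingredients the paper uses: the growth estimate $|F(s)|+|sf(s)|\leq \varepsilon\left(s^2+|s|^{2_\alpha^*}\right)+C_\varepsilon|s|^q$ coming from (f1)--(f2) and continuity, and the strong convergence $v_n\to v$ in $L^q(c\,d\mathcal{L}^N)$ supplied by Proposition \ref{prop:1}. The packaging, however, is genuinely different. The paper splits $\mathbb{R}^N$ into $B(0,r)$ and its complement: on the exterior it bounds the integral by $(2CM_2+1)\varepsilon$ using the boundedness of $\{v_n\}$ in $L^2\cap L^{2_\alpha^*}$ together with the smallness of $\int_{\complement B(0,r)}c|v_n|^q$, and on the ball it invokes an external Strauss-type compactness lemma (\cite{Chang}*{Lemma 2.4}) to pass to the limit despite the embedding into $L^{2_\alpha^*}_{\mathrm{loc}}$ not being compact. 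You instead run a single global Vitali argument: almost-everywhere convergence of a subsequence plus uniform integrability and tightness of $\{cF(v_n)\}_n$, where the non-tight critical piece $\varepsilon\|c\|_\infty(v_n^2+|v_n|^{2_\alpha^*})$ is rendered harmless by choosing $\varepsilon$ after $\delta$, and the $C_\varepsilon c|v_n|^q$ piece inherits uniform integrability and tightness from its $L^1$ convergence. Your route is more self-contained (no citation of an auxiliary convergence lemma) at the cost of the routine bookkeeping of Vitali's hypotheses and the final subsequence-of-subsequences step; the paper's route is shorter on the page but outsources the delicate local step. Both correctly identify that condition (K), through Proposition \ref{prop:1}, is what controls the behaviour at infinity.
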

\begin{proof}
 Let us fix $q \in (2,2_\alpha^*)$ and $\varepsilon>0$. From (f1)--(f3) there exists a constant $C>0$ such that
 \begin{equation} \label{eq:5}
  |c(x)F(s)| \leq \varepsilon C \left( s^2+|s|^{2_\alpha^*} \right) + c(x)|s|^q
 \end{equation}
for every $s \in \mathbb{R}$ and every $x \in \mathbb{R}^N$. Proposition \ref{prop:1} tells us that
\[
 \lim_{n \to +\infty} \int_{\mathbb{R}^N} c(x)|v_n(x)|^q \, dx = \int_{\mathbb{R}^N} c(x)|v(x)|^q \, dx,
\]
so that for some $r>0$,
\[
 \int_{\complement B(0,r)} c(x) |v_n(x)|^q \, dx < \varepsilon \quad\text{for all $n \in \mathbb{N}$}.
\]
But $\{v_n\}_n$ is bounded in $L^{\alpha,2}(\mathbb{R}^N)$ and therefore also in $L^2(\mathbb{R}^N)$ and 
in $L^{2_\alpha^*}(\mathbb{R}^N)$.
We choose $M_2>0$ such that $\int_{\mathbb{R}^N} v_n(x)^2 \, dx \leq M_2$ and $\int_{\mathbb{R}^N} |v_n(x)|^{2_\alpha^*} \, dx \leq M_2$ for all $n \in \mathbb{N}$.
Integrating (\ref{eq:5}) we get
\[
 \left| \int_{\complement B(0,r)} c(x)F(v_n(x))\, dx \right| < \left( 2CM_2+1 \right)\varepsilon \quad \text{for all $n \in \mathbb{N}$}.
\]
Applying a general convergence result (see for instance \cite{Chang}*{Lemma 2.4}) to the sequence $\{\|v_n\|_{L^{2_\alpha^*}(B(0,r))}\}_n$ together with our assumption
(f2) we conclude that
\[
 \lim_{n \to +\infty} \int_{B(0,r)} c(x)F(v_n(x))\, dx = \int_{B(0,r)} c(x)F(v(x))\, dx.
\]
This shows the validity of (\ref{eq:3}), and a similar argument proves also (\ref{eq:4}).
\end{proof}
\begin{proposition} \label{prop:3}
 The sequence $\{u_n\}_n$ introduced in (\ref{eq:6}) is bounded in $L^{\alpha,2}(\mathbb{R}^N)$.
\end{proposition}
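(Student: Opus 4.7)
I would argue by contradiction. Suppose $\|u_n\| \to +\infty$ and set $v_n = u_n/\|u_n\|$, so $\|v_n\|=1$. Up to a subsequence, $v_n \rightharpoonup v$ weakly in $L^{\alpha,2}(\mathbb{R}^N)$ and $v_n \to v$ a.e. in $\mathbb{R}^N$ (using the local compactness in Theorem \ref{th:1}). From $J(u_n) \to c_{\mathrm{mp}}$, dividing by $\|u_n\|^2$ gives
\[
\int_{\mathbb{R}^N} c(x)\frac{F(u_n(x))}{\|u_n\|^2}\,dx \longrightarrow \frac{1}{2}.
\]

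Case $v \not\equiv 0$: on $\{v>0\}$, $u_n(x) = \|u_n\| v_n(x) \to +\infty$, and assumption (f3) forces $F(u_n)/u_n^2 \to +\infty$. Since $c>0$ everywhere by (c1), Fatou's lemma applied to
\[
c(x)\frac{F(u_n)}{\|u_n\|^2} = c(x)\frac{F(u_n)}{u_n^2}\, v_n^2
\]
yields that the left-hand side above blows up, a contradiction.

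Case $v \equiv 0$: here I would invoke the well-known Jeanjean trick. For each $R>0$ pick $n$ large so $R/\|u_n\| \in (0,1)$, choose $t_n \in [0,1]$ with $J(t_n u_n) = \max_{t \in [0,1]} J(t u_n)$, and note that by choice of maximum, $J(t_n u_n) \ge J(R v_n)$. Since $Rv_n \rightharpoonup 0$ in $L^{\alpha,2}(\mathbb{R}^N)$, Proposition \ref{prop:2} gives $\int c(x)F(Rv_n)\,dx \to 0$, so $J(R v_n) \to R^2/2$, and consequently $J(t_n u_n) \to +\infty$. On the other hand, I would bound $J(t_n u_n)$ from above using the Cerami property. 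If $t_n \in (0,1)$ then $DJ(t_n u_n)[t_n u_n]=0$, so
\[
J(t_n u_n) = \int_{\mathbb{R}^N} c(x)\left[\tfrac{1}{2}f(t_n u_n)t_n u_n - F(t_n u_n)\right]dx.
\]
Assumption (f3) implies the integrand $\tfrac{1}{2}f(s)s - F(s)$ is nondecreasing in $s \geq 0$ (a short computation using $f(s)/s \le f(b)/b$ for $s\le b$); since $u_n \geq 0$ and $t_n \le 1$, this yields
\[
J(t_n u_n) \le \int_{\mathbb{R}^N} c(x)\left[\tfrac{1}{2}f(u_n)u_n - F(u_n)\right]dx = J(u_n) - \tfrac{1}{2}DJ(u_n)[u_n] \longrightarrow c_{\mathrm{mp}},
\]
since the Cerami condition (\ref{eq:6}) gives $DJ(u_n)[u_n] \to 0$. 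The boundary cases $t_n\in\{0,1\}$ are handled trivially. Hence $J(t_n u_n)$ remains bounded, contradicting $J(t_n u_n)\to+\infty$.

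The delicate step is the vanishing case $v\equiv 0$: the Ambrosetti–Rabinowitz condition is absent, so one cannot extract boundedness directly from a linear combination of $J(u_n)$ and $DJ(u_n)[u_n]$. The key leverage is the monotonicity of $s\mapsto s^{-1}f(s)$ in (f3), which both ensures the lifted comparison $\tfrac12 f(t_n u_n)t_n u_n - F(t_n u_n) \le \tfrac12 f(u_n)u_n - F(u_n)$ and, via the superquadratic growth of $F$, the blow-up along rays; together with the compact weighted embedding from Proposition \ref{prop:1} (used through Proposition \ref{prop:2}), this closes the argument.
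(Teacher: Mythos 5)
Your proposal is correct and follows essentially the same route as the paper: normalize the sequence, use Fatou's lemma with (f3) to handle the non-vanishing alternative, and in the vanishing case apply the Jeanjean fiber-maximum trick, bounding $J(t_nu_n)$ above via the monotonicity of $s\mapsto sf(s)-2F(s)$ together with the Cerami condition, and below by $T^2/2$ via Proposition \ref{prop:2}. The only cosmetic difference is that the paper does not split into cases but instead uses the Fatou argument to conclude $w=0$ before invoking the same comparison.
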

\begin{proof}
 For every $n \in \mathbb{N}$, let $t_n \in [0,1]$ be chosen so that $J(t_nu_n) = \max_{0 \leq t \leq 1} J(t u_n)$. Let us prove that
 the sequence $\{J(t_n u_n)\}_n$ is bounded from above in $\mathbb{R}$. The conclusion is trivial if either $t_n=0$ or $t_n=1$.
 If $0<t_n<1$, then $DJ(t_n u_n)u_n=0$. As a consequence,
 \[
  2J(t_n u_n) = 2 J(t_n u_n) - DJ(t_n u_n)u_n = \int_{\mathbb{R}^N} c(x)H(t_n u_n(x))\, dx,
 \]
for $H(s) = s f(s) - 2F(s)$. 
Since $u_n \geq 0$ and $H$ is non-decreasing,
\[
 2 J(t_n u_n) \leq \int_{\mathbb{R}^N} c(x) H(u_n(x))\, dx = 2J(u_n)-DJ(u_n)u_n = 2J(u_n)+o(1).
\]
This shows that $\{J(t_n u_n)\}_n$ is bounded from above. To complete the proof, we argue by contradiction. Let us
assume that (possibily along a subsequence) $\lim_{n \to +\infty} \|(I-\Delta)^{\alpha/2}u_n \|_2 =+\infty$. We normalize
$u_n$ by introducing $w_n = u_n / \|(I-\Delta)^{\alpha/2}u_n \|_2$. Without loss of generality, we may assume that
$w_n \to w$ weakly in $L^{\alpha,2}(\mathbb{R}^N)$. Divinding out the relation $J(u_n) \to c$, we find
\[
 o(1)+\frac{1}{2} = \int_{\mathbb{R}^N} \frac{c(x)F(u_n(x))}{ \left\|(I-\Delta)^{\alpha/2}u_n \right\|_2^2}\, dx = \int_{\mathbb{R}^N} \frac{c(x)F(u_n(x))}{|u_n(x)|^2}w_n(x)^2 \, dx.
\]
By (f3), to each $T>0$ we can attach $\xi>0$ with the property that $|s| \geq \xi$ implies $F(s) \geq Ts^2$. Therefore
\[
 o(1)+\frac{1}{2}\geq \int_{\complement w^{-1}(0) \cap \{|u_n| \geq \xi \}} \frac{c(x)F(u_n(x))}{|u_n(x)|^2}w_n(x)^2 \, dx \geq
 T \int_{\complement w^{-1}(0) \cap \{|u_n| \geq \xi \}} c(x) w_n(x)^2 \, dx.
\]
An application of Fatou's lemma yields $\frac{1}{2} \geq T \int_{\complement w^{-1}(0)} c(x) w(x)^2 \, dx$ and consequently $w=0$ by letting $T \to +\infty$.

Finally, given $T>0$, we have
\[
 J(t_n u_n) \geq J \left( \frac{T}{\left\|(I-\Delta)^{\alpha/2}u_n \right\|_2} u_n \right) = J(T w_n) = \frac{T^2}{2}
 -\int_{\mathbb{R}^N} c(x) F(Tw_n(x))\, dx.
\]
Proposition \ref{prop:2} yields $\lim_{n \to +\infty} \int_{\mathbb{R}^N} c(x) F(T w_n(x))\, dx =0$ and thus
\[
 \liminf_{n \to +\infty} J(t_n u_n) \geq \frac{T^2}{2}.
\]
Letting $T \to +\infty$ we contradict the boundedness from above of the sequence $\{J(t_n u_n)\}_n$. The proof is complete.
\end{proof}
Let us state and prove the main result of this section.
\begin{theorem}
 Suppose that (f1)--(f3) and (c1)--(c2) hold. Then equation (\ref{eq:2}) possesses at least one positive solution. Moreover $u \in C^{0,\mu}(\mathbb{R}^N)$ for some $q_0 \in [2,+\infty)$ and $\mu \in (0,1)$.
 \end{theorem}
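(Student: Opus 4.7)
The plan is to synthesize the preceding three propositions into the standard mountain-pass recipe. Take the Cerami sequence $\{u_n\}_n$ at the level $c_{\mathrm{mp}}>0$ introduced in (\ref{eq:6}), with $u_n \geq 0$. By Proposition~\ref{prop:3} it is bounded in $L^{\alpha,2}(\mathbb{R}^N)$, so along a subsequence $u_n \rightharpoonup u$ weakly, and $u \geq 0$ almost everywhere. To see that $u$ is a weak solution, I would test $DJ(u_n) \to 0$ against $\varphi \in C_c^\infty(\mathbb{R}^N)$: the bilinear form $\int (I-\Delta)^{\alpha/2} u_n \, (I-\Delta)^{\alpha/2} \varphi\, dx$ converges by weak convergence, and $\int c(x) f(u_n) \varphi\, dx \to \int c(x) f(u) \varphi\, dx$ by an argument parallel to Proposition~\ref{prop:2}, with $\varphi$ playing the role of the second $v_n$ factor in the subcritical estimate (\ref{eq:5}) and Proposition~\ref{prop:1} furnishing compactness on bounded balls. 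A density argument then extends $DJ(u) = 0$ to all of $L^{\alpha,2}(\mathbb{R}^N)$.

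Nontriviality of $u$ follows from the Cerami condition itself. Using $DJ(u_n) u_n = o(1)$ and Proposition~\ref{prop:2},
\[
c_{\mathrm{mp}} = \lim_{n\to\infty}\left(J(u_n) - \tfrac{1}{2} DJ(u_n) u_n\right) = \int_{\mathbb{R}^N} c(x)\left(\tfrac{1}{2} f(u) u - F(u)\right) dx.
\]
Since $c_{\mathrm{mp}}>0$ and the right-hand side vanishes identically when $u \equiv 0$, we must have $u \not\equiv 0$. Remark~\ref{rem:3}, combined with the convention $f(s) = 0$ for $s \leq 0$, then upgrades $u$ to a strictly positive solution through the maximum principle enjoyed by $(I-\Delta)^\alpha$.

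For the Hölder regularity I would bootstrap on $u = G_{2\alpha} * (c(x) f(u))$. Since $G_{2\alpha}$ is positive and integrable, Young's inequality applied to this convolution progressively raises the integrability of $u$, while assumptions (f1)--(f2) together with $c \in L^\infty$ keep $c(x)f(u)$ in suitable $L^p$ spaces throughout. After finitely many steps $u \in L^{q_0}(\mathbb{R}^N)$ for some large $q_0 \in [2,+\infty)$, and one further iteration yields $u \in C^{0,\mu}(\mathbb{R}^N)$ for some $\mu \in (0,1)$, either via the Morrey-type embedding encoded in Theorem~\ref{th:1}(3) or directly from pointwise estimates on the Bessel kernel. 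The main technical point is the bookkeeping of exponents through the bootstrap so as to cross the critical Sobolev threshold, but this is otherwise routine for subcritical fractional problems.
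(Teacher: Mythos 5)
Your proposal is correct and reaches the same conclusion, but it takes a slightly different route at the decisive step. The paper does not test $DJ(u_n)$ against compactly supported functions and then invoke density; instead it uses $DJ(u_n)u_n\to 0$ and $DJ(u_n)u\to 0$ together with Proposition~\ref{prop:2} to obtain
\[
\lim_{n\to+\infty}\bigl\|(I-\Delta)^{\alpha/2}u_n\bigr\|_2^2=\int_{\mathbb{R}^N}c(x)f(u(x))u(x)\,dx=\bigl\|(I-\Delta)^{\alpha/2}u\bigr\|_2^2,
\]
which upgrades the weak convergence to strong convergence in $L^{\alpha,2}(\mathbb{R}^N)$; nontriviality then comes for free from $J(u)=c_{\mathrm{mp}}>0$. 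Your argument instead establishes $DJ(u)=0$ by weak convergence plus density (which is fine: on the support of a fixed $\varphi$ only the locally compact embedding and the subcritical growth of $f$ are needed), and then rules out $u\equiv 0$ via the identity $c_{\mathrm{mp}}=\int c\,(\tfrac12 f(u)u-F(u))$, which is a legitimate application of both limits in Proposition~\ref{prop:2} and in fact also recovers $J(u)=c_{\mathrm{mp}}$ once $DJ(u)u=0$ is known. What the paper's route buys in addition is the relative compactness of the Cerami sequence itself, a stronger statement than mere existence of a nontrivial critical point; what your route buys is that you never need to verify norm convergence. On the remaining points you agree with the paper: positivity via $u_n\geq 0$, the convention $f(s)=0$ for $s\leq 0$, and the positivity of the kernel $G_\alpha$; and for regularity the paper simply defers to \cite{Felmer}, whose argument is precisely the $L^{q_0}$ bootstrap on $u=G_{2\alpha}*(c\,f(u))$ followed by the embedding of Theorem~\ref{th:1}(3) that you sketch. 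No gaps.
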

\begin{proof}
 We first prove that our Cerami sequence $\{u_n\}_n$ is relatively compact in $L^{\alpha,2}(\mathbb{R}^N)$.
 To this aim, by virtue of Proposition \ref{prop:3}, we may assume that, up to a subsequence, $u_n \to u$ weakly in $L^{\alpha,2}(\mathbb{R}^N)$. 
 Since $\{u_n\}_n$ is a Cerami sequence, we find
 \[
  \lim_{n \to +\infty} \left\|(I-\Delta)^{\alpha/2} \right\|_2^2 = \lim_{n \to +\infty} \int_{\mathbb{R}^N} c(x) f(u_n(x))u_n(x)\, dx.
 \]
By Proposition \ref{prop:2},
\[
 \lim_{n \to +\infty} \int_{\mathbb{R}^N} c(x)f(u_n(x))u_n(x)\, dx = \int_{\mathbb{R}^N} c(x) f(u(x))u(x)\, dx.
\]
Exploiting again the fact that $DJ(u_n)u \to 0$, we have 
\[
\left\|(I-\Delta)^{\alpha/2} u \right\|_2^2 = \int_{\mathbb{R}^N} c(x)f(u(x))u(x)\, dx.
\]
In particular $\lim_{n \to +\infty} \|(I-\Delta)^{\alpha/2} u_n\|_2^2
= \|(I-\Delta)^{\alpha/2} u\|_2^2$, or $u_n \to u$ strongly in
$L^{\alpha,2}(\mathbb{R}^N)$.  But $J(u)=c_{\mathrm{mp}}$ and
$DJ(u)=0$, so that $u$ weakly solves (\ref{eq:2}). The positivity of
$u$ follows from the fact that $u_n \geq 0$ and the positivity of the
Bessel function $G_\alpha$ see~\cite{FelmerVergara}*{Proposition 3.2}.
The regularity of $u$ follows with minor changes from the arguments developed in \cite{Felmer}.
\end{proof}
\begin{remark}
We have been sketchy about the regularity theory for our solutions, since the Bessel operator is precisely the main tool to develop a regularity theory for the fractional Laplacian, see \cite{Felmer}*{Appendix A}. In this sense, the fractional Laplacian is \emph{harder} to analyze.
Moreover, by similar arguments as those in \cite{FelmerVergara} it can be shown that our solution decays exponentially fast at infinity. This is a common feature for local elliptic partial differential operators, while it is false for the fractional Laplacian, see \cite{Felmer}.
\end{remark}

\section{Potentials having a finite limit} \label{sec:4}

The main tool that we used to solve equation (\ref{eq:2}) is the compactness of the embedding $L^{\alpha,2}(\mathbb{R}^N) \to
L^q(c\, d\mathcal{L}^N)$ stated in Proposition \ref{prop:1}. In this section we study a model case in which a different approach must
be used. We consider
\begin{equation} \label{eq:7}
 (I-\Delta)^\alpha u = \lambda b(x) |u|^{p-2}u + c(x) |u|^{q-2}u \quad\text{in $\mathbb{R}^N$},
\end{equation}
where $p$, $q \in (2,2_\alpha^*)$ and $\lambda>0$ is a parameter. We will assume that $c>0$ satisfies the condition (K), and impose the 
following ones on the potential $b$:
\begin{itemize}
 \item[(b1)] $b \in L^\infty(\mathbb{R}^N)$, $b \geq 0$ but not identically zero, and $\lim_{|x| \to +\infty} b(x) = \bar{b}$.
\end{itemize}
Weak solutions to equation (\ref{eq:7}) correspond to critical points of the Euler functional $J_\lambda \colon L^{\alpha,2}(\mathbb{R}^N)
\to \mathbb{R}$ defined by
\[
 J_\lambda(u)=\frac{1}{2}\left\| (I-\Delta)^{\alpha/2} u\right\|_2^2 - \frac{\lambda}{p} \int_{\mathbb{R}^N} b(x) |u(x)|^p\, dx 
 - \frac{1}{q} \int_{\mathbb{R}^N} c(x) |u(x)|^q \, dx.
\]
\begin{remark} \label{rem:1}
 The functional $u \mapsto \int c(x) |u(x)|^q \, dx$ is weakly sequentially continuous by the results of the previous section.
\end{remark}
Let us introduce the artificial constraint
\[
 \mathcal{M}_\lambda = \left\{ u \in L^{\alpha,2}(\mathbb{R}^N) \setminus \{0\} \mid DJ_\lambda(u)u=0 \right\},
\]
and standard arguments show that $\mathcal{M}_\lambda$ is a natural constraint for $J_\lambda$. In particular,
any solution of the minimization problem
\begin{equation*} 
 I_\lambda = \inf_{u \in \mathcal{M}_\lambda} J_\lambda (u)
\end{equation*}
is a solution to equation (\ref{eq:7}).

For $\lambda=0$, we introduce the constraint
\[
 \mathcal{M}_0 = \left\{ u \in L^{\alpha,2}(\mathbb{R}^N) \setminus \{0\} \mid DJ_0(u)u=0 \right\}
\]
corresponding to the Euler functional
\[
 J_0(u)=\frac{1}{2} \left\| (I-\Delta)^{\alpha/2} u\right\|_2^2 - \frac{1}{q} \int_{\mathbb{R}^N} c(x) |u(x)|^q \, dx.
\]
By Remark \ref{rem:1},  the minimization problem
\[
 I_0 = \inf_{u \in \mathcal{M}_0} J_0(u)
\]
is solved by some function $u_0 \in L^{\alpha,2}(\mathbb{R}^N)$ that satisfies $(I-\Delta)^{\alpha} u_0 = c(x)|u_0|^{q-2}u_0$. 
\begin{lemma}
There results $I_\lambda \leq I_0$ for all $\lambda>0$.
\end{lemma}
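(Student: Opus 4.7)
The plan is to exhibit a particular element of $\mathcal{M}_\lambda$ by rescaling the minimizer $u_0$ of $I_0$, and then compare its energy with $I_0$ using the positivity of $b$. Concretely, I would start from the function $u_0 \in \mathcal{M}_0$ with $J_0(u_0)=I_0$ (whose existence is guaranteed by Remark \ref{rem:1} and the statement preceding the lemma) and look for $t_\lambda>0$ such that $t_\lambda u_0 \in \mathcal{M}_\lambda$. Setting $g_\lambda(t)=J_\lambda(tu_0)$ we have
\[
g_\lambda'(t) = t\left\|(I-\Delta)^{\alpha/2}u_0\right\|_2^2 - \lambda t^{p-1}\int_{\mathbb{R}^N} b(x)|u_0|^p\,dx - t^{q-1}\int_{\mathbb{R}^N} c(x)|u_0|^q\,dx,
\]
and since $p,q>2$ the function $t\mapsto g_\lambda'(t)/t$ is strictly decreasing from $+\infty$ down to $-\infty$; hence there is a unique $t_\lambda>0$ with $g_\lambda'(t_\lambda)=0$, i.e.\ $t_\lambda u_0 \in \mathcal{M}_\lambda$, and this $t_\lambda$ is the unique global maximizer of $g_\lambda$ on $(0,+\infty)$.

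Next I would exploit two simple monotonicity facts. First, because $\lambda>0$ and $b\geq 0$, one has the pointwise (in $t$) inequality
\[
J_\lambda(tu_0) = J_0(tu_0) - \frac{\lambda t^p}{p}\int_{\mathbb{R}^N} b(x)|u_0|^p\,dx \leq J_0(tu_0)\qquad\text{for every }t\geq 0.
\]
Second, since $u_0 \in \mathcal{M}_0$ satisfies $\|(I-\Delta)^{\alpha/2}u_0\|_2^2 = \int c(x)|u_0|^q\,dx$, the same sort of elementary computation as above shows that the map $t\mapsto J_0(tu_0)$ attains its global maximum on $[0,+\infty)$ precisely at $t=1$, with value $J_0(u_0)=I_0$.

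Combining the two inequalities and the fact that $t_\lambda u_0$ is an admissible competitor for $I_\lambda$, I obtain
\[
I_\lambda \leq J_\lambda(t_\lambda u_0) \leq J_0(t_\lambda u_0) \leq \max_{t\geq 0} J_0(tu_0) = J_0(u_0) = I_0,
\]
which is the claim. I do not anticipate a real obstacle here: the only points that need checking are the existence and uniqueness of $t_\lambda$ (an elementary one-variable calculus exercise using $p,q>2$) and the fact that $I_0$ is indeed realized as $J_0(u_0)$, which has already been asserted before the lemma. Note that it is not necessary to assume $\int b|u_0|^p\,dx>0$; even if this integral vanishes, the chain of inequalities above still yields $I_\lambda \leq I_0$ (with equality in that degenerate case).
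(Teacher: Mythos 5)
Your proof is correct and follows essentially the same route as the paper: rescale the minimizer $u_0$ of $I_0$ to an element of $\mathcal{M}_\lambda$, discard the nonnegative term $\frac{\lambda t^p}{p}\int_{\mathbb{R}^N} b(x)|u_0|^p\,dx$, and use that $t=1$ maximizes $t\mapsto J_0(tu_0)$ because $u_0\in\mathcal{M}_0$. (One cosmetic slip: as $t\to 0^+$ the quantity $g_\lambda'(t)/t$ tends to $\|(I-\Delta)^{\alpha/2}u_0\|_2^2$, a finite positive number, not to $+\infty$; since this limit is positive the existence and uniqueness of $t_\lambda$ still follow, and your explicit treatment of the degenerate case $\int_{\mathbb{R}^N} b(x)|u_0|^p\,dx=0$ is in fact slightly more careful than the paper's chain of strict inequalities.)
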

\begin{proof}
Let $u \in L^{\alpha,2}(\mathbb{R}^N)$ be such that
\[
\left\| (I-\Delta)^{\alpha/2}u \right\|_2^2  = \int_{\mathbb{R}^N} c(x) |u(x)|^{q} \, dx.
\]
Pick $\bar{\sigma} \in (0,1)$ such that $v=\bar{\sigma}u \in \mathcal{M}_\lambda$. If we differentiate
\[
h(\sigma) = \frac{\sigma^2}{2} \left\| (I-\Delta)^{\alpha/2}u \right\|_2^2  - \frac{\sigma^q}{q} \int_{\mathbb{R}^N} c(x) |u(x)|^q \, dx
\]
and remark that $Dh(\sigma)>0$ for every $\sigma \in (0,1)$, we may conclude that 
\begin{multline*}
I_\lambda(v) = \frac{\bar{\sigma}}{2} \left\| (I-\Delta)^{\alpha/2}u \right\|_2^2 - \frac{\bar{\sigma}^p}{p} \int_{\mathbb{R}^N} \lambda b(x) |u(x)|^p \, dx - \frac{\bar{\sigma}^q}{q} \int_{\mathbb{R}^N} c(x) |u(x)|^q \, dx \\
< \frac{\bar{\sigma}}{2} \left\| (I-\Delta)^{\alpha/2}u \right\|_2^2 - \frac{\bar{\sigma}^q}{q} \int_{\mathbb{R}^N} c(x) |u|^q \, dx \\
< \frac{1}{2} \left\| (I-\Delta)^{\alpha/2}u \right\|_2^2 - \frac{1}{q} \int_{\mathbb{R}^N} c(x) |u|^q \, dx = I_0.
\end{multline*}
\end{proof}
Let us introduce
\[
 \alpha_1 = \inf_{\|u\|_p=1} \left\| (I-\Delta)^{\alpha/2} u \right\|_2^2>0.
\]
The main result of this section reads as follows.
\begin{theorem}
 Under our assumptions on $b$ and $c$, if
 \begin{equation} \label{eq:13}
  I_\lambda < \frac{p-2}{2p} \alpha_1^{\frac{p}{p-2}} \left( \lambda \bar{b} \right)^{\frac{2}{2-p}},
 \end{equation}
 then equation (\ref{eq:7}) possesses a nontrivial solution.
\end{theorem}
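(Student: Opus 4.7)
The plan is to realise a nontrivial solution of (\ref{eq:7}) as a minimiser of $J_\lambda$ on the Nehari manifold $\mathcal{M}_\lambda$, and to use hypothesis (\ref{eq:13}) exactly where it is needed, namely to prevent the weak limit of a minimising sequence from being identically zero. Since $\mathcal{M}_\lambda$ is a natural constraint, Ekeland's variational principle supplies a sequence $\{u_n\}_n\subset\mathcal{M}_\lambda$ with $J_\lambda(u_n)\to I_\lambda$ and $DJ_\lambda(u_n)\to 0$ in $L^{\alpha,2}(\mathbb{R}^N)^*$. On $\mathcal{M}_\lambda$ the identity $\|u_n\|^2=\lambda\int b|u_n|^p\,dx+\int c|u_n|^q\,dx$ combined with $p,q>2$ yields $J_\lambda(u_n)\geq\min\bigl(\tfrac12-\tfrac1p,\tfrac12-\tfrac1q\bigr)\|u_n\|^2$, and hence the sequence is bounded; the same identity, together with the continuous embedding $L^{\alpha,2}(\mathbb{R}^N)\hookrightarrow L^p(\mathbb{R}^N)\cap L^q(\mathbb{R}^N)$ from Theorem \ref{th:1}, also produces a uniform lower bound $\|u_n\|\geq\delta>0$. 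Passing to a subsequence we may assume $u_n\rightharpoonup u$ weakly in $L^{\alpha,2}(\mathbb{R}^N)$.

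Granted that $u\neq 0$, the limit is automatically a weak solution of (\ref{eq:7}): for any $\varphi\in C_c^\infty(\mathbb{R}^N)$ the quadratic form in $DJ_\lambda(u_n)\varphi$ passes by weak convergence, the $c$-term passes by Proposition \ref{prop:1}, and the $b$-term passes by the local compactness of the embedding $L^{\alpha,2}(\mathbb{R}^N)\hookrightarrow L^p_{\mathrm{loc}}(\mathbb{R}^N)$ in Theorem \ref{th:1} combined with $b\in L^\infty(\mathbb{R}^N)$. So the whole theorem reduces to excluding the vanishing scenario $u\equiv 0$, and this is where (\ref{eq:13}) enters.

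Suppose for contradiction $u_n\rightharpoonup 0$. Proposition \ref{prop:1} at once gives $\int c|u_n|^q\,dx\to 0$. For the $b$-term, split $b=\bar{b}+(b-\bar{b})$: the function $b-\bar{b}$ is bounded and vanishes at infinity while $u_n\to 0$ in $L^p_{\mathrm{loc}}(\mathbb{R}^N)$, so a standard $\varepsilon/R$ argument shows $\int (b-\bar{b})|u_n|^p\,dx\to 0$. The Nehari identity therefore collapses to
\[
\|u_n\|^2=\lambda\bar{b}\,\|u_n\|_p^p+o(1),
\]
and coupled with the variational definition of $\alpha_1$, i.e. $\|u_n\|_p^p\leq \alpha_1^{-p/2}\|u_n\|^p$, together with the lower bound $\|u_n\|\geq\delta$, this forces
\[
\|u_n\|^2\geq \alpha_1^{p/(p-2)}(\lambda\bar{b})^{2/(2-p)}+o(1).
\]
Exploiting the Nehari identity once again and the vanishing of the $c$-integral, one obtains
\[
I_\lambda=\lim_{n\to\infty}J_\lambda(u_n)=\frac{p-2}{2p}\lim_{n\to\infty}\|u_n\|^2\geq \frac{p-2}{2p}\,\alpha_1^{p/(p-2)}(\lambda\bar{b})^{2/(2-p)},
\]
contradicting (\ref{eq:13}). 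Hence $u\neq 0$, and the proof is concluded.

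The principal obstacle is precisely this vanishing step. Because $b\to\bar{b}>0$ at infinity, the embedding $L^{\alpha,2}(\mathbb{R}^N)\hookrightarrow L^p(b\,d\mathcal{L}^N)$ is \emph{not} compact and minimising sequences can a priori escape to infinity; the quantity $\frac{p-2}{2p}\alpha_1^{p/(p-2)}(\lambda\bar{b})^{2/(2-p)}$ is exactly the Nehari ground-state level of the limit problem $(I-\Delta)^\alpha u=\lambda\bar{b}\,|u|^{p-2}u$, and hypothesis (\ref{eq:13}) is the sharp threshold below which this ``energy at infinity'' cannot be attained, compelling a minimising sequence to concentrate at a finite point.
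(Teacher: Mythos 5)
Your proof is correct, but it takes a genuinely different route from the paper's. The paper runs a concentration--compactness argument at infinity: it introduces $\alpha_\infty=\lim_{R\to\infty}\limsup_n\int_{\complement B(0,R)}|u_n|^p\,dx$ and $\beta_\infty=\lim_{R\to\infty}\limsup_n\int_{\complement B(0,R)}|(I-\Delta)^{\alpha/2}u_n|^2\,dx$, proves $\alpha_1\alpha_\infty^{2/p}\le\beta_\infty$ by localizing the Bessel operator with the cutoffs $\tilde\varphi_R$ via the commutator Lemma~\ref{lem:2}, tests $DJ_\lambda(u_n)$ against $\tilde\varphi_R u_n$ to get $\beta_\infty\le\lambda\bar b\,\alpha_\infty$, and extracts $I_\lambda\ge\frac{p-2}{2p}\lambda\bar b\,\alpha_\infty$ from $J_\lambda(u_n)-\frac12DJ_\lambda(u_n)u_n$; combining these contradicts (\ref{eq:13}) unless $\alpha_\infty=0$. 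You instead dichotomize on the weak limit $u$ itself: when $u=0$ you kill the $c$-term by Proposition~\ref{prop:1}, replace $b$ by $\bar b$ through a local-compactness/tail splitting, and then apply the definition of $\alpha_1$ to the whole of $u_n$ rather than only to the mass escaping to infinity; the arithmetic producing the threshold $\frac{p-2}{2p}\alpha_1^{p/(p-2)}(\lambda\bar b)^{2/(2-p)}$ is then identical. Your route is more elementary --- it dispenses with Lemmas~\ref{lem:1}--\ref{lem:2} and with the splitting relations (\ref{eq:10})--(\ref{eq:11}) --- and it suffices for the statement as written, since in the non-vanishing case you exhibit the weak limit directly as a nontrivial solution. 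What the paper's stronger conclusion $\alpha_\infty=0$ buys, and your dichotomy does not, is control of the $L^p$-mass at infinity, which is what one would need to upgrade the conclusion to attainment of $I_\lambda$ on $\mathcal{M}_\lambda$. Two cosmetic points: the limit $\lim_n\|u_n\|^2$ in your final display should be a $\liminf$ (or a limit along a further subsequence), and your closing remark that the sequence must ``concentrate at a finite point'' overstates what your argument establishes, which is only that the weak limit is nonzero.
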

\begin{proof}
We follow the ideas developed in \cite{Chabrowski}. By Ekeland's variational principle, there exists a minimizing sequence $\{u_n\}_n \subset \mathcal{M}_\lambda$ such
 that $\lim_{n \to +\infty} J_\lambda(u_n)=I_\lambda$ and $\lim_{n \to +\infty} DJ_\lambda(u_n)=0$. It is readily seen that 
 the sequence $\{u_n\}_n$ is bounded in $L^{\alpha,2}(\mathbb{R}^N)$, so that we may assume without loss of generality that
 $u_n \to u$ weakly in $L^{\alpha,2}(\mathbb{R}^N)$. Let us set
 \begin{align*}
  \alpha_\infty &= \lim_{R \to +\infty} \limsup_{n \to +\infty} \int_{\complement B(0,R)} |u_n(x)|^p \, dx \\
  \beta_\infty &= \lim_{R \to +\infty} \limsup_{n \to +\infty} \int_{\complement B(0,R)} |(I-\Delta)^{\alpha/2} u_n|^2 \, dx.
 \end{align*}
We claim that
\begin{align}
&\alpha_1 \alpha_\infty^{2/p} \leq \beta_\infty  \label{eq:9}\\
&\limsup_{n \to +\infty} \int_{\mathbb{R}^N} |u_n(x)|^p \, dx = \int_{\mathbb{R}^N} |u(x)|^p\, dx + \alpha_\infty \label{eq:10}\\
&\limsup_{n \to +\infty} \int_{\mathbb{R}^N} |(I-\Delta)^{\alpha/2}u_n|^2 \, dx \geq \int_{\mathbb{R}^N} |(I-\Delta)^{\alpha/2} u|^2 \, dx + \beta_\infty.\label{eq:11}	
\end{align}
Let us fix a smooth, positive cutoff function $\varphi$ that equals one on a neighborhood of the origin. Define $\varphi_R = \varphi(\cdot / R)$ and $\tilde{\varphi}_R = 1-\varphi_R$. By definition of $\alpha_1$, we have
\[
  \alpha_1 \|\tilde{\varphi}_R u_n \|_p^2 \leq \| (I-\Delta)(\tilde{\varphi}_R u_n)\|_2^2
\]
By Lemmas \ref{lem:1} and \ref{lem:2} in the Appendix, 
\begin{multline*}
	(I-\Delta)^{\alpha/2} (\tilde{\varphi}_R u_n) = 
	(I-\Delta)^{\alpha/2} u_n - (I-\Delta)^{\alpha/2}(\varphi_R u_n) = \\
	(I-\Delta)^{\alpha/2}u_n -\varphi_R (I-\Delta)^{\alpha/2}u_n + o_n(1) =
	(1-\varphi_R)(I-\Delta)^{\alpha/2} u_n +o_n(1)
\end{multline*}
where $o_n(1) \to 0$ as $n \to +\infty$ in $L^2(\mathbb{R}^N)$. Therefore
\[
\alpha_1 \|\tilde{\varphi}_R u_n \|_p^p \leq \int_{\mathbb{R}^N} |\tilde{\varphi}_R|^2 |(I-\Delta)^{\alpha/2}u_n|^2 \, dx + o_n(1)
\]
Letting $n \to +\infty$ and $R \rightarrow +\infty$, we get (\ref{eq:9}). The relations (\ref{eq:10}) and (\ref{eq:11}) are easy and we omit their proofs.

To complete the proof, we need to show that $\alpha_\infty=0$. We argue by contradiction, and suppose that $\alpha_\infty>0$. Let us consider once more the function $\tilde{\varphi}_R$. Since $DJ(u_n)(\tilde{\varphi}_Ru_n) \to 0$ as $n \to +\infty$, we find that
\[
\beta_\infty \leq \lambda \bar{b} \alpha_\infty.
\]
Combining with (\ref{eq:9}) yields
\begin{equation} \label{eq:12}
\alpha_\infty \geq \left( \frac{\alpha_1}{\lambda \bar{b}}
\right)^{\frac{p}{p-2}}.
\end{equation}
Moreover,
\begin{multline*}
	J(u_n)-\frac{1}{2} DJ(u_n)u_n = \left( \frac{1}{2}-\frac{1}{p} \right) \lambda \int_{\mathbb{R}^N} b(x) |u_n(x)|^p\, dx + \left( \frac{1}{2}-\frac{1}{p} \right) \lambda \int_{\mathbb{R}^N} c(x) |u_n(x)|^q \, dx \\
	\geq \frac{(p-2)\lambda}{2p} \int_{\mathbb{R}^N} b(x) |u_n(x)|^p \tilde{\varphi}_R(x) \, dx,
\end{multline*}
and letting first $n \rightarrow +\infty$ and then $R \rightarrow +\infty$ we have that
\[
I_\lambda \geq \frac{p-2}{2p} \lambda \bar{b}\alpha_\infty.
\]
Together with (\ref{eq:12}) we get the contradiction
\[
I_\lambda \geq \frac{p-2}{2p} \left( \frac{\alpha_1}{\lambda \bar{b}} \right)^{\frac{p}{p-2}} \bar{b}\lambda = \frac{p-2}{2p} \alpha_1^{\frac{p}{p-2}} \left( \lambda \bar{b} \right)^{\frac{2}{2-p}}.
\]
\end{proof}
\begin{remark}
The lack of precise information about solutions to the limit equation 
\begin{displaymath}
(I-\Delta)^\alpha u = \bar{b} |u|^{p-2}u \quad\text{in $\mathbb{R}^N$},	
\end{displaymath}
and particularly the lack of a \emph{uniqueness} result does not allow us to state the assumption (\ref{eq:13}) in terms of the ground state energy of the associated Euler functional.
\end{remark}

\section{Concave-convex nonlinearities and sign-changing potentials} \label{sec:5}

In this section, following \cites{Brown,Goyal},  we consider the equation
\begin{equation} \label{eq:14}
(I-\Delta)^{\alpha} u = \lambda b(x)|u|^{p-2}u + c(x) |u|^{q-2}u \quad\text{in $\mathbb{R}^N$}
\end{equation}
where $N>2\alpha$, \(\lambda>0\) and  and $1<p<2<q<\frac{2N}{N-2\alpha}$. 
We assume that both $|b|$ and $|c|$  satisfy the compactness conditions (K). 
\begin{remark} \label{rem:2}
If $u_n \to 0$ weakly in $L^{\alpha,2}(\mathbb{R}^N)$, it follows from Proposition \ref{prop:1} that
\begin{equation*}
\left| \int_{\mathbb{R}^N} b(x) |u_n(x)|^p \, dx \right| \leq \int_{\mathbb{R}^N} |b(x)| |u_n(x)|^p \, dx \rightarrow 0
\end{equation*}
as $n \to +\infty$ along a subsequence. Therefore $\lim_{n \to +\infty} \int_{\mathbb{R}^N} b(x) |u_n(x)|^p \, dx = 0$. 
By the same token, $\lim_{n \to +\infty} \int_{\mathbb{R}^N} c(x) |u_n(x)|^q \, dx = 0$.
\end{remark}
\begin{definition} \label{def:2}
We say that a continuous function $w$ changes sign if both sets $\{x\in \mathbb{R}^N \mid w(x)>0 \}$ and  $\{x\in \mathbb{R}^N \mid w(x)<0 \}$
are nonempty.
\end{definition}

Let us recall from \cite{Fall} that (\ref{eq:14}) is equivalent to the Neumann system
\begin{equation} \label{eq:15}
\left\{
\begin{array}{ll}
-\operatorname{div} \left( y^{1-2\alpha}\nabla v \right)+y^{1-2\alpha} v =0&\text{in \( \mathbb{R}^{N+1}_+ \)}\\
- y^{1-2\alpha} \frac{\partial v}{\partial y} = \kappa_\alpha \left\{ \lambda b(x) |u(x,0)|^{p-2}u(x,0)+c(x) |u(x,0)|^{q-2}u(x,0) \right\} &\text{on \(  \mathbb{R}^N \times \{0\} \)}
\end{array}
\right.
\end{equation}
Weak solutions to (\ref{eq:15}) correspond to critical points of the Euler functional $E_\lambda \colon H^1(\mathbb{R}_{+}^{N+1},y^{1-2\alpha}) \to \mathbb{R}$ defined by
\begin{multline*} 
E_\lambda (v) = \frac{1}{2} \int_{\mathbb{R}_{+}^{N+1}} y^{1-2\alpha} \left( |\nabla v(x,y)|^2 + v(x,y)^2 \right) \, dx\, dy \\
{}- \frac{\lambda}{p} \int_{\mathbb{R}^N} b(x) |\tr v(x)|^{p} \, dx - \frac{1}{q} \int_{\mathbb{R}^N} c(x) |\tr v(x)|^{q}\, dx,
\end{multline*}
where $H^1(\mathbb{R}_{+}^{N+1},y^{1-2\alpha})$ is the completion of
$C_0^\infty(\mathbb{R}_{+}^{N+1})$ with respect to the weighted norm
\[
\|v\|_{H^1(\mathbb{R}_{+}^{N+1},y^{1-2\alpha})}=\left(
\int_{\mathbb{R}_{+}^{N+1}} y^{1-2\alpha} \left( |\nabla v(x,y)|^2 +
v(x,y)^2 \right)\, dx\, dy \right)^{1/2}
\]
and $\tr \colon H^{1}(\mathbb{R}_{+}^{N+1},y^{1-2\alpha}) \to
L^{\alpha,2}(\mathbb{R}^N)$ is the continuous trace operator defined
in \cite{Fall}*{Proposition 6.2}. In particular
\begin{equation} \label{eq:18}
  \kappa_\alpha \|\tr v\|^2_{L^{\alpha,2}(\mathbb{R}^N)} \leq
  \|v\|_{H^1(\mathbb{R}_{+}^{N+1})}^2 \quad \text{for all $v \in
    H^1(\mathbb{R}_{+}^{N+1})$}.
\end{equation}
\begin{remark}
Up to a rescaling, we can (and will) assume that $\kappa_\alpha =1$.
\end{remark}

The Nehari manifold associated to (\ref{eq:15}) is
\begin{equation*}
\mathcal{N}_\lambda = \left\{ v \in
H^1(\mathbb{R}_{+}^{N+1},y^{1-2\alpha}) \setminus \{0\} \mid
DE_\lambda (v)v=0 \right\}
\end{equation*}
or the set of those $v \in H^1(\mathbb{R}_{+}^{N+1},y^{1-2\alpha})
\setminus \{0\}$ satisfying
\begin{equation*}
\int_{\mathbb{R}_{+}^{N+1}} y^{1-2\alpha} \left( |\nabla v(x,y)|^2 +
v(x,y)^2 \right) \, dx\, dy = \lambda \int_{\mathbb{R}^N} b(x) |\tr
v(x)|^{p} \, dx + \int_{\mathbb{R}^N} c(x) |\tr v(x)|^{q}\, dx.
\end{equation*}
To each $v \in H^1(\mathbb{R}_{+}^{N+1},y^{1-2\alpha})$ we attach its
fiber map $\varphi_v \colon [0,+\infty) \to \mathbb{R}$ defined by
  $\varphi_v(t)=E_\lambda (tv)$. By a direct calculation,
\begin{multline*}
D\varphi_v (t) = t \int_{\mathbb{R}_{+}^{N+1}} y^{1-2\alpha} \left(
|\nabla v(x,y)|^2 + v(x,y)^2 \right) \, dx\, dy - \lambda t^{p-1}
\int_{\mathbb{R}^N} b(x) |\tr v(x)|^{p} \, dx \\ {}- t^{q-1}
\int_{\mathbb{R}^N} c(x) |\tr v(x)|^{q}\, dx
\end{multline*}
and
\begin{multline*}
D^2 \varphi_v (t) = \int_{\mathbb{R}_{+}^{N+1}} y^{1-2\alpha} \left(
|\nabla v(x,y)|^2 + v(x,y)^2 \right) \, dx\, dy - (p-1) \lambda t^{p-2}
\int_{\mathbb{R}^N} b(x) |\tr v(x)|^{p} \, dx \\ {}-(r-1) t^{q-2}
\int_{\mathbb{R}^N} c(x) |\tr v(x)|^{q}\, dx
\end{multline*}
\begin{remark}
Clearly, $tv \in \mathcal{N}_\lambda$ if and only if $D\varphi_v (t)=0$. In particular, $v \in \mathcal{N}_\lambda$ if and only if $D \varphi_v (1)=0$.
\end{remark}
We decompose
\[
\mathcal{N}_\lambda = \mathcal{N}_\lambda^{-} \cup \mathcal{N}_\lambda^0 \cup \mathcal{N}_\lambda^{+},
\]
where
\begin{align*}
\mathcal{N}_\lambda^{-} &= \left\{ v \in \mathcal{N}_\lambda \mid D^2 \varphi_v (1) <0 \right\} \\
\mathcal{N}_\lambda^{0} &= \left\{ v \in \mathcal{N}_\lambda \mid D^2 \varphi_v (1) =0 \right\} \\
\mathcal{N}_\lambda^{+} &= \left\{ v \in \mathcal{N}_\lambda \mid D^2 \varphi_v (1) >0 \right\}.
\end{align*}
Let us define
\begin{align*}
C^{+} &= \left\{ v \in H^1(\mathbb{R}_{+}^{N+1},y^{1-2\alpha}) \mid \int_{\mathbb{R}^N} c(x) |\tr v(x)|^{q} \, dx > 0 \right\} \\
C^{-} &= \left\{ v \in H^1(\mathbb{R}_{+}^{N+1},y^{1-2\alpha}) \mid \int_{\mathbb{R}^N} c(x) |\tr v(x)|^{q} \, dx < 0 \right\} \\
C^0 &= \left\{ v \in H^1(\mathbb{R}_{+}^{N+1},y^{1-2\alpha}) \mid \int_{\mathbb{R}^N} c(x) |\tr v(x)|^{q}\, dx =0 \right\}\\
B^{+} &= \left\{ v \in H^1(\mathbb{R}_{+}^{N+1},y^{1-2\alpha}) \mid \int_{\mathbb{R}^N} b(x) |\tr v(x)|^{p} \, dx > 0 \right\} \\
B^{-} &= \left\{ v \in H^1(\mathbb{R}_{+}^{N+1},y^{1-2\alpha}) \mid \int_{\mathbb{R}^N} b(x) |\tr v(x)|^{p} \, dx < 0 \right\} \\
B^{0} &= \left\{ v \in H^1(\mathbb{R}_{+}^{N+1},y^{1-2\alpha}) \mid \int_{\mathbb{R}^N} b(x) |\tr v(x)|^{p} \, dx = 0 \right\}.
\end{align*}
\begin{lemma} \label{lem:4}
\begin{enumerate}
\item If $v \in B^{-} \cap C^{-}$, then no multiple of $v$ belongs to $\mathcal{N}_\lambda$.
\item If either $v \in B^{+} \cap C^{-}$ or $v \in B^{-} \cap C^{+}$, then there exists one and only one $t(v)>0$ such that $t(v) v \in \mathcal{N}_\lambda$.
\end{enumerate}
\end{lemma}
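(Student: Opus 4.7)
The strategy is to study the fiber map directly: for $t>0$, the condition $tv \in \mathcal{N}_\lambda$ is equivalent to $D\varphi_v(t)=0$, which after division by $t$ reads
\[
A(v) = \lambda t^{p-2} B(v) + t^{q-2} C(v),
\]
where I abbreviate
\[
A(v) = \int_{\mathbb{R}_{+}^{N+1}} y^{1-2\alpha}\left( |\nabla v|^2 + v^2 \right) \, dx\, dy, \qquad B(v) = \int_{\mathbb{R}^N} b(x) |\tr v|^{p} \, dx, \qquad C(v) = \int_{\mathbb{R}^N} c(x) |\tr v|^{q}\, dx.
\]
Note $A(v)>0$ for $v\neq 0$, and the signs of $B(v)$ and $C(v)$ encode membership in the $B^{\pm}, C^{\pm}$ sets. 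The whole argument then reduces to an elementary analysis of the scalar function $\psi(t) = \lambda t^{p-2} B(v) + t^{q-2} C(v)$ on $(0,+\infty)$, using only $1<p<2<q$.

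For part (1), if $v \in B^{-}\cap C^{-}$ then both $B(v)<0$ and $C(v)<0$, so $\psi(t)<0 < A(v)$ for every $t>0$. Hence the equation $\psi(t)=A(v)$ has no positive solution, which is exactly the statement that no positive multiple of $v$ lies in $\mathcal{N}_\lambda$. (One might also observe that negative multiples are pointless since $\varphi_v(t)$ is defined for $t\ge 0$ and the functional depends on $|tv|$.)

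For part (2), I handle the two cases separately by studying the monotonicity and limits of $\psi$.
In case $v \in B^{+}\cap C^{-}$, since $p-2<0$ one has $\lambda t^{p-2} B(v) \to +\infty$ as $t\to 0^+$ and $\to 0$ as $t\to+\infty$, while $t^{q-2}C(v)\to 0$ as $t\to 0^+$ and $\to -\infty$ as $t\to+\infty$. Therefore $\psi(0^+)=+\infty$, $\psi(+\infty)=-\infty$, and both summands are strictly decreasing in $t$ (their signs together with the signs of $p-2$ and $q-2$ make $\psi'(t)<0$ for every $t>0$). Strict monotonicity plus the intermediate value theorem yield a unique $t(v)>0$ with $\psi(t(v))=A(v)$.
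In case $v \in B^{-}\cap C^{+}$ the signs are reversed and the same analysis gives $\psi(0^+)=-\infty$, $\psi(+\infty)=+\infty$, and $\psi'(t)>0$ everywhere, again producing a unique positive crossing.

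There is no real obstacle here; the content of the lemma is a routine sign analysis. The only point worth flagging is that one should verify the split $\mathcal{N}_\lambda = \mathcal{N}_\lambda^{-}\cup \mathcal{N}_\lambda^0 \cup \mathcal{N}_\lambda^{+}$ is actually informative in the present setting, but this is handled later in the paper; for Lemma \ref{lem:4} itself, what matters is strict monotonicity of $\psi$, which ensures $D^2\varphi_v(t(v))\neq 0$ and so $t(v)v\notin \mathcal{N}_\lambda^0$ in both subcases of part (2).
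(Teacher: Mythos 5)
Your proof is correct and is precisely the ``elementary inspection of the fibering map'' that the paper invokes without writing out: dividing $D\varphi_v(t)=0$ by $t$ and doing the sign and monotonicity analysis of $\psi(t)=\lambda t^{p-2}B(v)+t^{q-2}C(v)$ is the intended argument, and your case-by-case treatment (including the limits at $0^+$ and $+\infty$ and the strict monotonicity from $1<p<2<q$) is accurate. No gaps; your closing remark that strict monotonicity of $\psi$ forces $D^2\varphi_v(t(v))\neq 0$ is a correct and useful observation.
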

\begin{proof} 
Both statements can be proved by an elementary inspection of the fibering map $\varphi_v$.
\end{proof}
The case $v \in B^{+} \cap C^{+}$ requires some additional care.
\begin{lemma}
There exists $\lambda_0>0$ such that $\lambda < \lambda_0$ implies $\varphi_v>0$ for all $v \in H^1(\mathbb{R}_{+}^{N+1},y^{1-2\alpha})$. If $\lambda<\lambda_0$ and $v \in B^{+} \cap C^{+}$, then $\varphi_v$ possesses exactly two critical points.
\end{lemma}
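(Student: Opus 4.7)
My plan is to analyze the fibering map $\varphi_v(t)=E_\lambda(tv)$ directly, exploiting the fact that the critical point equation has a built-in scaling invariance which produces a threshold $\lambda_0$ independent of $v$. Writing $A(v)=\int_{\mathbb R^{N+1}_+} y^{1-2\alpha}(|\nabla v|^2+v^2)\,dx\,dy$, $B(v)=\int_{\mathbb R^N} b(x)|\tr v|^p\,dx$ and $C(v)=\int_{\mathbb R^N} c(x)|\tr v|^q\,dx$, one has
\[
D\varphi_v(t)=t\,A(v)-\lambda t^{p-1}B(v)-t^{q-1}C(v).
\]
For $v\in B^{+}\cap C^{+}$, dividing by $t^{p-1}$ shows that $t>0$ is a critical point of $\varphi_v$ if and only if $\psi_v(t)=\lambda B(v)$, where $\psi_v(t):=t^{2-p}A(v)-t^{q-p}C(v)$.

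Since $1<p<2<q$, I would next observe that $\psi_v(0^{+})=0$, $\psi_v(t)\to-\infty$ as $t\to\infty$, and $\psi_v'$ vanishes at a unique positive $t_{\max}(v)=\bigl((2-p)A(v)/((q-p)C(v))\bigr)^{1/(q-2)}$, where $\psi_v$ attains its global maximum. A direct computation yields
\[
M(v):=\psi_v(t_{\max}(v))=K\,A(v)^{(q-p)/(q-2)}\,C(v)^{-(2-p)/(q-2)},
\]
with $K=K(p,q)>0$ explicit. By monotonicity of $\psi_v$, the equation $\psi_v(t)=\lambda B(v)$ has exactly two positive roots $t_{+}(v)<t_{\max}(v)<t_{-}(v)$ if and only if $\lambda B(v)<M(v)$; at these roots $D^2\varphi_v$ is respectively strictly positive and strictly negative, so $t_{\pm}(v)v$ land in $\mathcal N_\lambda^{\pm}$.

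The main step is then to bound $M(v)/B(v)$ uniformly from below on $B^{+}\cap C^{+}$. Using $b,c\in L^\infty$ together with the trace estimate \eqref{eq:18} and the continuous embedding of Theorem~\ref{th:1}, one has $B(v)\leq C_1\,A(v)^{p/2}$ and $C(v)\leq C_2\,A(v)^{q/2}$. Substituting these bounds into $M(v)/B(v)$ and collecting powers, the resulting exponent of $A(v)$ equals
\[
\frac{q-p}{q-2}-\frac{p}{2}-\frac{q(2-p)}{2(q-2)}=\frac{2(q-p)-p(q-2)-q(2-p)}{2(q-2)}=0,
\]
i.e.\ the threshold is scale-invariant. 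One therefore obtains $M(v)/B(v)\geq c_0>0$ with $c_0$ depending only on $p,q,\|b\|_\infty,\|c\|_\infty$ and the embedding constants, and it suffices to put $\lambda_0:=c_0$.

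For the first assertion, which I read as the nondegeneracy statement $\mathcal N_\lambda^{0}=\emptyset$ (since $\varphi_v(0)=0$ forbids a literal pointwise strict positivity), I would note that the condition $v\in\mathcal N_\lambda^{0}$ is precisely $\lambda B(v)=M(v)$ on $B^{+}\cap C^{+}$; the uniform inequality $\lambda_0\leq M(v)/B(v)$ rules this out as soon as $\lambda<\lambda_0$, while on the other sign classes the absence of degenerate elements is immediate from Lemma~\ref{lem:4}. I expect the main obstacle to be exactly the exponent cancellation displayed above: it is the structural reason a uniform $\lambda_0$ exists, and without isolating this homogeneity one could at best obtain a threshold depending on $\|v\|$, which would not suffice.
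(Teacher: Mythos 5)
Your argument for the second assertion is correct and is a genuinely different (though standard, Brown--Wu style) parametrization of the same underlying fact. The paper works with the auxiliary map $F_v(t)=\frac{t^2}{2}A(v)-\frac{t^q}{q}C(v)=\varphi_v(t)+\frac{\lambda t^p}{p}B(v)$ and its maximizer $t^*$, shows $F_v(t^*)\geq\delta>0$ uniformly via the trace--Sobolev inequality, bounds the perturbation $\frac{\lambda(t^*)^p}{p}B(v)\leq\lambda c\,F_v(t^*)^{p/2}$, and concludes $\varphi_v(t^*)\geq F_v(t^*)^{p/2}\bigl(F_v(t^*)^{(2-p)/2}-\lambda c\bigr)>0$ for $\lambda<\lambda_0:=c^{-1}\delta^{(2-p)/2}$. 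You instead divide $D\varphi_v$ by $t^{p-1}$ and compare $\lambda B(v)$ with the maximum $M(v)$ of $\psi_v$; your exponent computation showing that $M(v)/B(v)$ is scale-invariant is exactly the same cancellation that makes the paper's $\delta$ and $c$ independent of $v$, and your route gives the cleanest proof of ``exactly two critical points'' (at most two roots of $\psi_v(t)=\lambda B(v)$ by unimodality, exactly two when $\lambda B(v)<M(v)$) together with the correct sign of $D^2\varphi_{t_{\pm}(v)v}(1)$.

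The one substantive discrepancy concerns the first assertion. You read ``$\varphi_v>0$'' as $\mathcal{N}_\lambda^0=\emptyset$, but the paper's proof makes clear the intended meaning is the pointwise bound $\varphi_v(t^*)\geq\delta^{p/2}\bigl(\delta^{(2-p)/2}-\lambda c\bigr)>0$ at the specific point $t^*$, uniformly in $v\in C^{+}$; this quantitative bound is precisely what Corollary~\ref{cor:2} reuses to get the uniform lower bound $E_\lambda\geq\delta_1>0$ on $\mathcal{N}_\lambda^{-}$. Your inequality $\lambda B(v)<M(v)$ does not by itself produce a positive lower bound for $\varphi_v$ at its interior maximum $t_{-}(v)$ (the Nehari identity at $t_{-}$ gives $\varphi_v(t_{-})$ as a difference of two positive terms, with no obvious uniform gap), so as written your argument would leave Corollary~\ref{cor:2} without its input. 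This is easily repaired by adding the paper's one-line estimate $\varphi_v(t^*)\geq F_v(t^*)-\lambda cF_v(t^*)^{p/2}$, after which $\varphi_v(t_{-})\geq\varphi_v(t^*)>0$ since $t^*>t_{+}(v)$; but it should be stated, not replaced by the weaker nondegeneracy claim. A final minor point: deducing $\mathcal{N}_\lambda^0=\emptyset$ ``from Lemma~\ref{lem:4} on the other sign classes'' needs a word about the boundary cases $B^0$ and $C^0$, which Lemma~\ref{lem:4} does not cover (they are handled by the same one-line inspection of $\psi_v$).
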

\begin{proof}
Let $v \in H^1(\mathbb{R}_{+}^{N+1},y^{1-2\alpha})$ be such that $\int_{\mathbb{R}^N} c(x) |\tr v(x)|^{q} \, dx >0$. We introduce 
\begin{equation*}
F_v (t) = \frac{t^2}{2} \int_{\mathbb{R}_{+}^{N+1}} y^{1-2\alpha} \left( |\nabla v(x,y)|^2 + v(x,y)^2 \right)\, dx\, dy 
- \frac{t^{q}}{q} \int_{\mathbb{R}^N} c(x) | \tr v(x)|^{q} \, dx.
\end{equation*}
Since
\begin{equation*}
D F_v (t) = t \int_{\mathbb{R}_{+}^{N+1}} y^{1-2\alpha} \left( |\nabla v(x,y)|^2 + v(x,y)^2 \right)\, dx\, dy 
- t^{q-1}\int_{\mathbb{R}^N} c(x) | \tr v(x)|^{q} \, dx,
\end{equation*}
the function $F_v$ attains its maximum at
\[
t^* = \left( \frac{\int_{\mathbb{R}^{N+1}_{+}} y^{1-2\alpha} \left( |\nabla (x,y)|^2 + v(x,y)^2 \right)\, dx \, dy}{\int_{\mathbb{R}^N} c(x) |\tr v(x)|^{q} \, dx} \right)^{\frac{1}{q-2}}.
\]
Moreover
\begin{align*}
F_v(t^*) &= \left( \frac{1}{2} - \frac{1}{q} \right) \left(
\frac{\left(\int_{\mathbb{R}^{N+1}_{+}} y^{1-2\alpha} \left( |\nabla (x,y)|^2 + v(x,y)^2 \right)\, dx \, dy \right)^{q}}{\left( \int_{\mathbb{R}^N} c(x) |\tr v(x)|^{q} \, dx \right)^2}
\right)^{\frac{1}{q-2}} \\
D^2 F_v(t^*) &= \left(1-q \right) \int_{\mathbb{R}_{+}^{N+1}} y^{1-2\alpha} \left( |\nabla v(x,y)|^2 + v(x,y)^2 \right) \, dx \, dy <0.
\end{align*}
Let $S_{q}$ be the best constant for the inequality
\[
\left(\int_{\mathbb{R}^N} |\tr v(x)|^{q}\, dx \right)^{\frac{1}{q}} \leq S_{q} \sqrt{\int_{\mathbb{R}_{+}^{N+1}} y^{1-2\alpha} \left( |\nabla v(x,y)|^2 + v(x,y)^2 \right) \, dx\, dy}
\]
that follows from (\ref{eq:18}) and Theorem \ref{th:1}.
Then 
\begin{equation*} 
F_v (t^*) \geq \left( \frac{1}{2} - \frac{1}{q} \right) \left( \frac{1}{\|b^{+}\|_\infty S_{q}^{2r}} \right)^{\frac{1}{q-2}} = \delta >0
\end{equation*}
with $\delta$ independent of $v$. Moreover,
\begin{multline*}
\frac{(t^*)^{p}}{p} \int_{\mathbb{R}^N} b(x) |\tr v(x)|^{p} \, dx \\
\leq \frac{\|b\|_\infty}{p} S_{p}^{p} \left( \frac{\int_{\mathbb{R}_{+}^{N+1}} y^{1-2\alpha} \left( |\nabla v(x,y)|^2 + v(x,y)^2 \right)\, dx\, dy}{\int_{\mathbb{R}^N} c(x) |\tr v(x)|^{q}\, dx} \right)^{\frac{p}{q-2}} \|v\|_{H^1(\mathbb{R}^{N+1}_{+},y^{1-2\alpha})}^{p} \\
=\frac{\|b\|_\infty}{p} S_{p}^{p}  \left( \frac{\left( \int_{\mathbb{R}_{+}^{N+1}} y^{1-2\alpha} \left( |\nabla v(x,y)|^2 + v(x,y)^2 \right) dx \, dy \right)^{q}}{\left( \int_{\mathbb{R}^N} c(x) |\tr v(x)|^{q} \, dx \right)^2} \right)^{\frac{p}{2(q-2)}}  \\
=\frac{\|b\|_\infty}{p} S_{p}^{p}  \left( \frac{2q}{q-2} \right)^{\frac{p}{2}} F_v(t^*)^{\frac{p}{2}} = c F_v(t^*)^{\frac{p}{2}} .
\end{multline*}
Therefore
\begin{equation*}
\varphi_v(t^*) \geq F_v(t^*) - \lambda c F_v (t^*)^{\frac{p}{2}} 
= F_v (t^*)^{\frac{p}{2}} \left( F_v(t^*)^{\frac{2-p}{2}} - \lambda c \right) \geq \delta^{\frac{p}{2}} \left( \delta^{\frac{2-p}{2}} - \lambda c \right).
\end{equation*}
We complete the proof by choosing \(\lambda < c^{-1} \delta^{\frac{2-p}{2}} = \lambda_0\).
\end{proof}
\begin{corollary} \label{cor:2}
Let $\lambda<\lambda_0$. There exists $\delta_1>0$ such that $E_\lambda (v) \geq \delta_1$ for every $v \in \mathcal{N}_\lambda^{-}$.
\end{corollary}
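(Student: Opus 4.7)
The plan is to show that $\mathcal{N}_\lambda^-\subset C^+$ and then to split $\mathcal{N}_\lambda^-$ according to the sign of $\int_{\mathbb{R}^N} b(x)|\tr v(x)|^p\,dx$, treating each sub-case separately. For the inclusion, I would inspect the fibering map $\varphi_v$ case by case in the spirit of Lemma~\ref{lem:4}: in $B^+\cap C^-$ and $B^+\cap C^0$ one substitutes $D\varphi_v(t(v))=0$ inside $D^2\varphi_v(t(v))$ and finds $D^2\varphi_v(t(v))>0$, so the unique critical point lies in $\mathcal{N}_\lambda^+$; in $B^-\cap C^0$, $B^0\cap C^0$ and $B^0\cap C^-$ one has $\varphi_v'>0$ on $(0,+\infty)$ so $\mathcal{N}_\lambda$ is never reached; and $B^-\cap C^-$ is already excluded by Lemma~\ref{lem:4}. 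Hence every $v\in\mathcal{N}_\lambda^-$ belongs to $C^+$.

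Using $DE_\lambda(v)v=0$ to eliminate the $q$-integral yields the useful identity
\[
E_\lambda(v) = \frac{q-2}{2q}\|v\|_{H^1(\mathbb{R}^{N+1}_+,y^{1-2\alpha})}^2 - \frac{q-p}{pq}\lambda\int_{\mathbb{R}^N} b(x)|\tr v(x)|^p\,dx.
\]
If $\int_{\mathbb{R}^N} b(x)|\tr v(x)|^p\,dx\le 0$, the second term is non-negative and $E_\lambda(v)\ge\frac{q-2}{2q}\|v\|_{H^1}^2$; combining $DE_\lambda(v)v=0$ with the sign hypothesis gives $\|v\|_{H^1}^2\le\int_{\mathbb{R}^N} c(x)|\tr v(x)|^q\,dx\le\|c\|_\infty S_q^q\|v\|_{H^1}^q$, which produces a positive uniform lower bound for $\|v\|_{H^1}$ and therefore for $E_\lambda(v)$.

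The remaining sub-case $v\in B^+\cap C^+$ is the main obstacle. The preceding lemma's proof produced a strictly positive lower bound, call it $\delta_0$, on $\varphi_v(t^*)$, where $t^*$ is the maximiser of the auxiliary function $F_v$; but $t^*$ need not coincide with the Nehari point $t=1$. The bridge is the direct computation
\[
\varphi_v'(t^*) = DF_v(t^*) - \lambda(t^*)^{p-1}\int_{\mathbb{R}^N} b(x)|\tr v(x)|^p\,dx = -\lambda(t^*)^{p-1}\int_{\mathbb{R}^N} b(x)|\tr v(x)|^p\,dx < 0,
\]
which forces $t^*$ into one of the two open intervals where $\varphi_v$ is strictly decreasing. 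Denoting by $t_1<t_2=1$ the two critical points of $\varphi_v$, the sub-interval $(0,t_1)$ is ruled out because there $\varphi_v$ decreases from $\varphi_v(0)=0$, which would force $\varphi_v(t^*)\le 0$ and contradict positivity; hence $t^*\in(1,+\infty)$, and the monotonicity of $\varphi_v$ on that interval gives $E_\lambda(v)=\varphi_v(1)\ge\varphi_v(t^*)\ge\delta_0$. Setting $\delta_1$ to be the minimum of the two resulting lower bounds completes the proof.
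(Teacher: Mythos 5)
Your argument is correct, and in the central case $v\in B^{+}\cap C^{+}$ it follows the paper's strategy: transport the uniform lower bound on $\varphi_v(t^*)$ obtained in the preceding lemma to the Nehari value $\varphi_v(1)$. Where you genuinely diverge is in the two points that the paper's one-line proof glosses over, and your additions are real improvements. First, the paper asserts that every $v\in\mathcal{N}_\lambda^{-}$ satisfies $\int_{\mathbb{R}^N}b(x)|\tr v(x)|^{p}\,dx>0$; this is not justified, and in fact fails: for $v\in B^{-}\cap C^{+}$ the unique Nehari point provided by Lemma~\ref{lem:4} is a global maximum of its fiber map, hence lies in $\mathcal{N}_\lambda^{-}$. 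Your separate sub-case $\int b|\tr v|^{p}\le 0$, handled through the identity $E_\lambda(v)=\frac{q-2}{2q}\|v\|^{2}-\frac{q-p}{pq}\lambda\int b|\tr v|^{p}$ together with the uniform bound $\|v\|\ge\left(\|c\|_\infty S_q^{q}\right)^{-1/(q-2)}$ forced by the constraint, fills this gap with an elementary argument. Second, the paper's chain contains the equality $\varphi_v(1)=\varphi_v(t^*)$, which is not literally true since $t^*$ maximises the auxiliary function $F_v$ rather than $\varphi_v$; your localisation of $t^*$ in $(1,+\infty)$ via $\varphi_v'(t^*)=-\lambda(t^*)^{p-1}\int b|\tr v|^{p}<0$, together with the exclusion of $(0,t_1)$ by the positivity of $\varphi_v(t^*)$, turns it into the correct inequality $\varphi_v(1)\ge\varphi_v(t^*)$. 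As a minor remark, the inclusion $\mathcal{N}_\lambda^{-}\subset C^{+}$ admits a shortcut: substituting $D\varphi_v(1)=0$ into $D^2\varphi_v(1)<0$ gives $(2-p)\|v\|^{2}<(q-p)\int c|\tr v|^{q}$, so your six-case inspection of the fibering maps, while valid, can be compressed to one line.
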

\begin{proof}
Indeed, if $v \in \mathcal{N}_\lambda^{-}$, then $\varphi_v$ has a positive global maximum at $t=1$, and in addition 
\[
\int_{\mathbb{R}^N} b(x) |\tr v(x)|^{p} \, dx >0. 
\]
Hence
\begin{equation*}
E_\lambda(v) = \varphi_v(1) = \varphi_v(t^*) \geq F_v(t^*)^{\frac{p-2}{2}} \left(F_v(t^*)^{\frac{2-p}{2}}-\lambda c \right) \geq \delta^{\frac{p}{2}} \left( \delta^{\frac{2-p}{2}}-\lambda c \right)>0
\end{equation*}
provided that $\lambda < \lambda_0$.
\end{proof}
\begin{corollary}
If $\lambda \in (0,\lambda_0)$, then $\mathcal{N}_\lambda^0 = \emptyset$.
\end{corollary}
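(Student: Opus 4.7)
The plan is to argue by contradiction: suppose $\lambda\in(0,\lambda_0)$ and $v\in\mathcal{N}_\lambda^0$, and derive a contradiction with the fiber-map count established in the preceding lemma.

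\emph{Step 1: $v\in B^+\cap C^+$.} Write $\|v\|^2=\int_{\mathbb{R}_+^{N+1}}y^{1-2\alpha}(|\nabla v|^2+v^2)\,dx\,dy$, $B(v)=\int b\,|\tr v|^p\,dx$ and $C(v)=\int c\,|\tr v|^q\,dx$. The two requirements $D\varphi_v(1)=0$ and $D^2\varphi_v(1)=0$ read $\|v\|^2=\lambda B(v)+C(v)$ and $\|v\|^2=(p-1)\lambda B(v)+(q-1)C(v)$. Subtracting gives $(2-p)\lambda B(v)=(q-2)C(v)$. Since $1<p<2<q$ and $\lambda>0$, both coefficients are strictly positive, so $B(v)$ and $C(v)$ share the same sign; if both vanished we would have $\|v\|^2=0$ and $v=0$, contradicting $v\in\mathcal{N}_\lambda$. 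Hence $B(v)>0$ and $C(v)>0$, i.e.\ $v\in B^+\cap C^+$.

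\emph{Step 2: Reformulation in terms of $\psi$.} Setting $\psi(t):=\lambda B(v)\,t^{p-2}+C(v)\,t^{q-2}$, one checks immediately that $\varphi_v'(t)=t(\|v\|^2-\psi(t))$ for $t>0$, so that positive critical points of $\varphi_v$ are precisely the solutions of $\psi(t)=\|v\|^2$; moreover an elementary computation yields $\varphi_v''(t_0)=-t_0\,\psi'(t_0)$ at any such critical point $t_0$. Because $B(v),C(v)>0$ and $p<2<q$, the function $\psi$ tends to $+\infty$ as $t\to 0^+$ and as $t\to+\infty$, and has a unique interior critical point $t_m>0$, which is therefore its strict global minimum.

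\emph{Step 3: Contradiction.} The hypothesis $v\in\mathcal{N}_\lambda^0$ forces $\psi(1)=\|v\|^2$ and $\psi'(1)=0$; by the uniqueness of the critical point of $\psi$ this gives $1=t_m$ and $\|v\|^2=\min\psi$. Then the horizontal line of height $\|v\|^2$ is tangent to the graph of $\psi$ and meets it only at $t_m$, so $\varphi_v$ admits exactly one positive critical point. This contradicts the preceding lemma, according to which, for $\lambda<\lambda_0$ and $v\in B^+\cap C^+$, the fiber map $\varphi_v$ has exactly two positive critical points. The only non-routine point is the reduction of the two algebraic conditions defining $\mathcal{N}_\lambda^0$ to the geometric statement that $t=1$ is the (unique) tangency point of $\psi$ with the level $\|v\|^2$; once this is noticed, the previous lemma closes the argument.
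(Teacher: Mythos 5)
Your argument is correct, and it is worth noting that the paper offers no proof at all for this corollary -- it is stated as an immediate consequence of the preceding lemma, and what you have written is essentially the justification the author leaves implicit. Your reduction of the two conditions $D\varphi_v(1)=D^2\varphi_v(1)=0$ to the statement that $t=1$ is the unique tangency point of $\psi(t)=\lambda B(v)t^{p-2}+C(v)t^{q-2}$ with the level $\|v\|^2$ is the standard fibering-map computation (as in Brown--Wu), and the clash with ``exactly two critical points'' for $v\in B^{+}\cap C^{+}$ when $\lambda<\lambda_0$ closes the argument cleanly. One small point in Step 1 deserves an extra line: from $(2-p)\lambda B(v)=(q-2)C(v)$ you correctly get that $B(v)$ and $C(v)$ have the same sign and cannot both vanish, but to exclude the case where both are \emph{negative} you should invoke the first Nehari relation $\|v\|^2=\lambda B(v)+C(v)>0$, which would otherwise be violated; only then does $v\in B^{+}\cap C^{+}$ follow. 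With that one-line repair the proof is complete. An alternative, equally standard route (which the author may have had in mind) is to combine the two relations defining $\mathcal{N}_\lambda^0$ with the Sobolev trace inequalities to produce upper and lower bounds on $\|v\|$ that are incompatible for $\lambda$ small; your version has the advantage of reusing the lemma already proved rather than redoing estimates.
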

\begin{lemma}
Let $v$ be a local minimum point of $E_\lambda$ on either $\mathcal{N}_\lambda^{+}$ or $\mathcal{N}_\lambda^{-}$ such that $v \notin \mathcal{N}_\lambda^0$. Then $DE_\lambda(v)=0$.
\end{lemma}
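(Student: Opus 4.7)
The plan is to use the standard Lagrange multiplier approach on the Nehari constraint. Define the $C^1$ functional
\[
\Phi(v) = DE_\lambda(v)[v],
\]
so that $\mathcal{N}_\lambda = \{v \in H^1(\mathbb{R}_{+}^{N+1},y^{1-2\alpha})\setminus\{0\} \mid \Phi(v)=0\}$. A direct computation using the product rule and the fact that $\varphi_v(t) = E_\lambda(tv)$ yields
\[
D\Phi(v)[v] = D^2 E_\lambda(v)[v,v] + DE_\lambda(v)[v] = D^2 \varphi_v(1)
\]
at any $v \in \mathcal{N}_\lambda$, since $DE_\lambda(v)[v] = 0$ there.

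Next I would verify that $\mathcal{N}_\lambda^{+}$ and $\mathcal{N}_\lambda^{-}$ are relatively open in $\mathcal{N}_\lambda$. Indeed, the map $v \mapsto D^2\varphi_v(1)$ is continuous on $H^1(\mathbb{R}_{+}^{N+1},y^{1-2\alpha})$ (it is a sum of continuous polynomial functionals in $v$), so the conditions $D^2\varphi_v(1) > 0$ and $D^2\varphi_v(1) < 0$ single out open subsets. Consequently, if $v$ is a local minimum of $E_\lambda$ on $\mathcal{N}_\lambda^{+}$ or $\mathcal{N}_\lambda^{-}$, then $v$ is actually a local minimum of $E_\lambda$ restricted to the full set $\mathcal{N}_\lambda$ in a neighborhood of $v$.

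Now, since $v \notin \mathcal{N}_\lambda^{0}$ we have $D^2\varphi_v(1) \neq 0$, hence $D\Phi(v)[v] \neq 0$ and in particular $D\Phi(v) \not\equiv 0$. Therefore $\mathcal{N}_\lambda$ is a $C^1$-submanifold of $H^1(\mathbb{R}_{+}^{N+1},y^{1-2\alpha})$ near $v$, and the Lagrange multiplier rule produces $\mu \in \mathbb{R}$ such that
\[
DE_\lambda(v) = \mu\, D\Phi(v).
\]
Evaluating both sides at $v$ gives $0 = DE_\lambda(v)[v] = \mu\, D\Phi(v)[v] = \mu\, D^2\varphi_v(1)$, and the non-vanishing of $D^2 \varphi_v(1)$ forces $\mu = 0$. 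Thus $DE_\lambda(v) = 0$, i.e., $v$ is a critical point of $E_\lambda$ in the whole space, as desired.

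There is no serious obstacle in this argument; it is the textbook verification that a Nehari-type constraint is natural away from $\mathcal{N}_\lambda^{0}$. The only point deserving care is the computation of $D\Phi(v)[v]$ on $\mathcal{N}_\lambda$ and the identification of it with $D^2\varphi_v(1)$, which ensures that the hypothesis $v \notin \mathcal{N}_\lambda^{0}$ is precisely what is needed to rule out a nonzero Lagrange multiplier.
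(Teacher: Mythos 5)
Your proof is correct and follows essentially the same route as the paper: both define the constraint functional $\Phi(v)=DE_\lambda(v)[v]$ (the paper calls it $I_\lambda$), invoke the Lagrange multiplier rule, test against $v$, and use $D^2\varphi_v(1)\neq 0$ to kill the multiplier. The only difference is that you spell out two details the paper leaves implicit — the relative openness of $\mathcal{N}_\lambda^{\pm}$ in $\mathcal{N}_\lambda$ and the identity $D\Phi(v)[v]=D^2\varphi_v(1)$ on the constraint — which is a welcome tightening rather than a new idea.
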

\begin{proof}
By the Lagrange Multiplier Rule, $DE_\lambda (v) = \mu DI_\lambda (v)$ for some $\mu \in \mathbb{R}$, where we have set $I_\lambda(v) = DE_\lambda (v)v$. Hence $DE_\lambda (v)v = \mu DI_\lambda (v)v = \mu D^2 \varphi_v(1)=0$. Since $v \notin \mathcal{N}_\lambda^0$ we must have $\mu =0$.
\end{proof}
\begin{lemma} \label{lem:3}
The restriction of $E_\lambda$ to $\mathcal{N}$ is bounded from below and coercive.
\end{lemma}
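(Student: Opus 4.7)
The plan is to exploit the Nehari constraint to eliminate the highest order term from $E_\lambda$ and then rely on the Sobolev--type embedding inherited from \eqref{eq:18} and Theorem~\ref{th:1} to control the remaining $p$--term.

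First, for $v \in \mathcal{N}_\lambda$ the identity $DE_\lambda(v)v=0$ reads
\[
\|v\|_{H^1(\mathbb{R}_+^{N+1},y^{1-2\alpha})}^2 = \lambda\int_{\mathbb{R}^N} b(x)|\tr v(x)|^p\,dx + \int_{\mathbb{R}^N} c(x)|\tr v(x)|^q\,dx,
\]
so I would solve for $\int c(x)|\tr v|^q\,dx$ and substitute into the definition of $E_\lambda$. This gives the clean expression
\[
E_\lambda(v) = \left(\frac{1}{2}-\frac{1}{q}\right)\|v\|_{H^1(\mathbb{R}_+^{N+1},y^{1-2\alpha})}^2 - \lambda\left(\frac{1}{p}-\frac{1}{q}\right)\int_{\mathbb{R}^N} b(x)|\tr v(x)|^p\,dx,
\]
in which both coefficients are strictly positive since $1<p<2<q$.

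Next, I would estimate the $p$--term. By H\"older, the continuity of the trace operator from \cite{Fall}*{Proposition 6.2}, and the Sobolev embedding $L^{\alpha,2}(\mathbb{R}^N)\hookrightarrow L^p(\mathbb{R}^N)$ given by Theorem~\ref{th:1}(2) (together with interpolation if needed, since $p\in(1,2)$), there is a constant $C=C(p,\alpha,N)>0$ with
\[
\left|\int_{\mathbb{R}^N} b(x)|\tr v(x)|^p\,dx\right| \leq \|b\|_\infty\,\|\tr v\|_p^p \leq C\,\|b\|_\infty\,\|v\|_{H^1(\mathbb{R}_+^{N+1},y^{1-2\alpha})}^p.
\]
Plugging this in yields
\[
E_\lambda(v) \geq \left(\frac{1}{2}-\frac{1}{q}\right)\|v\|_{H^1(\mathbb{R}_+^{N+1},y^{1-2\alpha})}^2 - \lambda\left(\frac{1}{p}-\frac{1}{q}\right)C\|b\|_\infty\,\|v\|_{H^1(\mathbb{R}_+^{N+1},y^{1-2\alpha})}^p.
\]

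Finally, since $p<2$, the right--hand side is of the form $At^2-Bt^p$ with $A,B>0$ and $t=\|v\|_{H^1(\mathbb{R}_+^{N+1},y^{1-2\alpha})}\ge 0$. This function of $t$ is continuous on $[0,+\infty)$, vanishes at $t=0$, and tends to $+\infty$ as $t\to+\infty$; therefore it is bounded from below by some constant depending only on $A$, $B$ and $p$, and it forces $E_\lambda(v)\to+\infty$ whenever $t\to+\infty$. Both the boundedness from below and the coercivity of $E_\lambda|_{\mathcal{N}_\lambda}$ follow. No step looks delicate here; the only thing to be a bit careful about is making sure the Sobolev constant from Theorem~\ref{th:1} is applied to $\tr v$ in $L^{\alpha,2}(\mathbb{R}^N)$ for the exponent $p\in(1,2)\subset(0,2^*_\alpha)$, which is covered by our standing hypotheses on $p$.
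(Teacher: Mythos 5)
Your overall strategy coincides with the paper's: use $DE_\lambda(v)v=0$ to eliminate the $q$-term, arrive at $E_\lambda(v)=\left(\tfrac12-\tfrac1q\right)\|v\|_{H^1(\mathbb{R}_+^{N+1},y^{1-2\alpha})}^2-\lambda\left(\tfrac1p-\tfrac1q\right)\int_{\mathbb{R}^N} b(x)|\tr v(x)|^p\,dx$, bound the remaining integral by a constant times $\|v\|^p$, and conclude from $p<2$ that a function of the form $At^2-Bt^p$ is bounded below and tends to $+\infty$. This is exactly the paper's argument (the second ``$=$'' in its displayed computation is evidently a typo for ``$-$'').

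The step you justify incorrectly is the chain $\left|\int_{\mathbb{R}^N} b|\tr v|^p\,dx\right|\le\|b\|_\infty\|\tr v\|_p^p\le C\|b\|_\infty\|v\|^p$. Theorem \ref{th:1}(2) provides the embedding $L^{\alpha,2}(\mathbb{R}^N)\hookrightarrow L^q(\mathbb{R}^N)$ only for $2\le q\le 2_\alpha^*$, and no interpolation between $L^2$ and $L^{2_\alpha^*}$ can reach an exponent below $2$: on the whole space $H^\alpha(\mathbb{R}^N)$ does \emph{not} embed into $L^p(\mathbb{R}^N)$ when $1<p<2$. For instance $u(x)=(1+|x|)^{-\beta}$ with $N/2<\beta<N/p$ belongs to $H^1(\mathbb{R}^N)\subset H^\alpha(\mathbb{R}^N)$ but not to $L^p(\mathbb{R}^N)$, so $\|\tr v\|_p$ may be infinite and factoring out $\|b\|_\infty$ destroys the estimate. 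What is actually needed is the \emph{weighted} bound $\int_{\mathbb{R}^N}|b(x)||\tr v(x)|^p\,dx\le C\|v\|^p$, whose validity rests on the decay of $b$ encoded in condition (K) rather than on $\|b\|_\infty$ alone; this is the mechanism behind Proposition \ref{prop:1} and Remark \ref{rem:2}, which the section tacitly extends to the exponent $p$. The paper's own proof writes the resulting inequality $E_\lambda(v)\ge c_1\|v\|^2-c_2\|v\|^p$ without detail, but it cannot be obtained by discarding the weight as you do, so you should reroute this step through the weighted embedding.
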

\begin{proof}
Indeed, for any $v \in \mathcal{N}_\lambda$, we have
\begin{align*}
E_\lambda (v) &= \left( \frac{1}{2} - \frac{1}{q} \right) \|v\|_{H^1(\mathbb{R}^{N+1}_{+},y^{1-2\alpha})}^2 = \lambda \left( \frac{1}{p} - \frac{1}{q} \right) \int_{\mathbb{R}^N} b(x) |\tr v(x)|^{p} \, dx \\
&\geq c_1 \|v\|_{H^1(\mathbb{R}^{N+1}_{+},y^{1-2\alpha})}^2 - c_2 \|v\|_{H^1(\mathbb{R}^{N+1}_{+},y^{1-2\alpha})}^{p},
\end{align*}
and we conclude since $p<2$.
\end{proof}
\begin{lemma}
If $\lambda<\lambda_0$, then $E_\lambda$ attains its minimum on $\mathcal{N}_\lambda^{+}$.
\end{lemma}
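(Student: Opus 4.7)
The plan is to carry out a direct minimization on $\mathcal{N}_\lambda^+$, using the weighted compact embedding Proposition~\ref{prop:1} to pass to the limit in the nonlinear integrals. First I would verify that $m^+ := \inf_{\mathcal{N}_\lambda^+} E_\lambda$ is strictly negative. Since $b^+ \not\equiv 0$, we can pick $w \in H^1(\mathbb{R}^{N+1}_+, y^{1-2\alpha})$ with $\int_{\mathbb{R}^N} b(x)|\tr w(x)|^p\, dx > 0$. Because $1 < p < 2$, the term $-\lambda t^{p-1}\int b|\tr w|^p$ dominates $\varphi_w'(t)$ as $t \to 0^+$, so $\varphi_w(t) < 0$ for small $t$; an inspection of the fiber map then yields a local minimum at some $t^+(w) > 0$ with $t^+(w)\, w \in \mathcal{N}_\lambda^+$ and $E_\lambda(t^+(w)w) = \varphi_w(t^+(w)) < 0$. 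Combined with Lemma~\ref{lem:3}, this gives $-\infty < m^+ < 0$.

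Next, take a minimizing sequence $\{v_n\} \subset \mathcal{N}_\lambda^+$. Lemma~\ref{lem:3} ensures boundedness in $H^1(\mathbb{R}^{N+1}_+, y^{1-2\alpha})$, so up to a subsequence $v_n \rightharpoonup v$ weakly. Since $|b|$ and $|c|$ both satisfy (K), Proposition~\ref{prop:1} applied via the trace operator gives $\tr v_n \to \tr v$ strongly in $L^p(|b|\,d\mathcal{L}^N) \cap L^q(|c|\,d\mathcal{L}^N)$; hence
\[
\int_{\mathbb{R}^N} b(x)|\tr v_n(x)|^p\,dx \to \int_{\mathbb{R}^N} b(x)|\tr v(x)|^p\,dx, \qquad \int_{\mathbb{R}^N} c(x)|\tr v_n(x)|^q\,dx \to \int_{\mathbb{R}^N} c(x)|\tr v(x)|^q\,dx.
\]
By the Nehari identity $\|v_n\|_{H^1}^2 = \lambda \int b|\tr v_n|^p + \int c|\tr v_n|^q$, the squared norms converge to a limit $A_\infty = \lambda \int b|\tr v|^p + \int c|\tr v|^q$. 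If $v=0$, then $A_\infty = 0$, forcing $v_n \to 0$ strongly and $E_\lambda(v_n) \to 0$, contradicting $m^+ < 0$. Therefore $v \neq 0$.

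The crucial step is to promote weak to strong convergence. Weak lower semicontinuity gives $\|v\|_{H^1}^2 \leq A_\infty$. I claim equality holds. If not, then $\varphi_v'(1) = \|v\|_{H^1}^2 - \lambda\int b|\tr v|^p - \int c|\tr v|^q < 0$, so $v \notin \mathcal{N}_\lambda$. A fiber analysis for $\varphi_v$, analogous to Lemma~\ref{lem:4} and the subsequent structural analysis (using that $v$ inherits $\int b|\tr v|^p > 0$ from the $v_n \in \mathcal{N}_\lambda^+ \subset B^+$, which in turn follows from $(2-p)\lambda \int b|\tr v_n|^p > (q-2)\int c|\tr v_n|^q$), then produces a unique $t^* > 0$ with $t^* v \in \mathcal{N}_\lambda^+$ satisfying $\varphi_v(t^*) < \varphi_v(1) \leq \liminf_n E_\lambda(v_n) = m^+$, contradicting the definition of $m^+$. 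Hence $\|v_n\|_{H^1} \to \|v\|_{H^1}$, and in the Hilbert space this upgrades weak to strong convergence, giving $v \in \mathcal{N}_\lambda$ and $E_\lambda(v) = m^+$. Finally, since $\mathcal{N}_\lambda^0 = \emptyset$ and Corollary~\ref{cor:2} yields $E_\lambda|_{\mathcal{N}_\lambda^-} \geq \delta_1 > 0 > m^+$, we conclude $v \in \mathcal{N}_\lambda^+$.

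The main obstacle is the strong convergence step: the fiber-map analysis must carefully dispatch the several sign configurations for $\int b|\tr v|^p$ and $\int c|\tr v|^q$, using that $v_n \in \mathcal{N}_\lambda^+$ (i.e.\ the local minimum of each $\varphi_{v_n}$ sits at $t=1$) in order to guarantee that the projected point $t^*v$ lies in the $\mathcal{N}_\lambda^+$ component rather than the $\mathcal{N}_\lambda^-$ one. Everything else is essentially a standard application of coercivity, weak lower semicontinuity, and the weighted compact embedding.
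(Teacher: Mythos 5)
Your proposal is correct and follows essentially the same route as the paper: negativity of the infimum via the fiber map, a bounded minimizing sequence, strong convergence of the weighted integrals via Proposition~\ref{prop:1}, and a contradiction from a strict norm drop by projecting the weak limit back onto $\mathcal{N}_\lambda^{+}$. The one step you flag as delicate --- guaranteeing that the projected point $t^{*}v$ satisfies $\varphi_v(t^{*})<\varphi_v(1)$ --- is exactly where the paper does its work, observing that $D\varphi_{v_k}(t_\lambda)>0$ for large $k$ forces $t_\lambda>1$.
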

\begin{proof}
We know from Lemma \ref{lem:3} that $E_\lambda$ is bounded from below on $\mathcal{N}_\lambda^{+}$. Therefore 
there exists a sequence $\{v_k\}_k \subset \mathcal{N}_\lambda^{+}$ with
\[
\lim_{k \to +\infty} E_\lambda (v_k) = \inf_{\mathcal{N}_\lambda^{+}} E_\lambda > -\infty.
\]
Again from Lemma \ref{lem:3} the sequence $\{v_k\}_k$ is bounded. We are then allowed to suppose, 
without loss of generality, that it converges weakly to some $v_\lambda$ in $
H^1(\mathbb{R}^{N+1}_{+},y^{1-2\alpha})$. Let us pick 
$v \in H^1(\mathbb{R}^{N+1}_{+},y^{1-2\alpha})$ such that 
$\int_{\mathbb{R}^N} b(x) |\tr v(x)|^{p} \, dx>0$; then by Lemma \ref{lem:4}  there 
exists $t_1=t_1(v)>0$ such that $t_1 v \in \mathcal{N}_\lambda^{+}$ and $E_\lambda (t_1 v)<0$. 
This implies that $\inf_{\mathcal{N}_\lambda^{+}}E_\lambda <0$. On $\mathcal{N}_\lambda$ we have
\[
E_\lambda (v_k) = \left( \frac{1}{2} - \frac{1}{q} \right) \|v_k\|^2_{H^1(\mathbb{R}_{+}^{N+1},y^{1-2\alpha})} - \lambda \left( \frac{1}{p} - \frac{1}{q} \right) \int_{\mathbb{R}^N} b(x) |\tr v_k(x)|^{p}\, dx
\]
and thus
\[
\lambda \left( \frac{1}{p} - \frac{1}{q} \right) \int_{\mathbb{R}^N} b(x) |\tr v_k(x)|^{p}\, dx = \left( \frac{1}{2} - \frac{1}{q} \right) \|v_k\|^2_{H^1(\mathbb{R}_{+}^{N+1},y^{1-2\alpha})} - E_\lambda (v_k).
\]
By Proposition \ref{prop:1}, Remark \ref{rem:2} and a semicontinuity argument, we let $k \to +\infty$ and get
\[
\lambda \left( \frac{1}{p} - \frac{1}{q} \right) \int_{\mathbb{R}^N} b(x) |\tr v(x)|^{p}\, dx  \geq \left(\frac{1}{2} - \frac{1}{q} \right) \|v\|^2_{H^1(\mathbb{R}_{+}^{N+1},y^{1-2\alpha})} - \inf_{\mathcal{N}_\lambda^{+}} E_\lambda > 0.
\]
We finally claim that \(v_k \to v_\lambda\) strongly. If not, then
\begin{equation*}
\int_{\mathbb{R}_{+}^{N+1}} y^{1-2\alpha} \left( |\nabla v(x,y)|^2 + v(x,y)^2 \right) \, dx \, dy 
< \liminf_{k \to +\infty} \int_{\mathbb{R}_{+}^{N+1}} y^{1-2\alpha} \left( |\nabla v_k(x,y)|^2 + v_k(x,y)^2 \right) \, dx \, dy.
\end{equation*}
Let us recall that 
\begin{multline*}
D\varphi_{v_k}(1) = t \int_{\mathbb{R}_{+}^{N+1}} y^{1-2\alpha} \left( |\nabla v_k(x,y)|^2 + v_k(x,y)^2 \right) 
\, dx\, dy \\ - \lambda t^{p-1} \int_{\mathbb{R}^N} b(x) | \tr v_k(x)|^{p} \, dx - t^{q-1} \int_{\mathbb{R}^N} c(x) 
|\tr v_k(x)|^{q}\, dx
\end{multline*}
and
\begin{multline*}
D\varphi_{v_\lambda}(1) = t \int_{\mathbb{R}_{+}^{N+1}} y^{1-2\alpha} \left( |\nabla v_\lambda(x,y)|^2 + 
v_\lambda(x,y)^2 \right) \, dx\, dy \\ 
- \lambda t^{p-1} \int_{\mathbb{R}^N} b(x) | \tr v_\lambda(x)|^{p} \, dx 
- t^{q-1} \int_{\mathbb{R}^N} c(x) |\tr v_\lambda(x)|^{q}\, dx.
\end{multline*}
Let $t_\lambda =t_\lambda(v_\lambda)>0$ be chosen in order that $t_\lambda v_\lambda \in \mathcal{N}_\lambda^{+}$: this is possible by Lemma \ref{lem:4}.
For $k \gg 1$, we have $D\varphi_{v_k} (t_\lambda)>0$, which yields $t_\lambda>1$. But then, by definition of $t_\lambda$,
\[
E_\lambda (t_\lambda v_\lambda) < E_\lambda (v_\lambda) < \lim_{k \to +\infty} E_\lambda (v_k) = \inf_{\mathcal{N}_\lambda^{+}} E_\lambda.
\]
This is impossible, and we must have $v_k \to v_\lambda$ strongly in $H^1(\mathbb{R}_{+}^{N+1},y^{1-2\alpha})$ as $k \to +\infty$. Since $\mathcal{N}_\lambda^0 = \emptyset$, there results $v_\lambda \in \mathcal{N}^{+}$, and the proof is complete.
\end{proof}
\begin{lemma}
If $\lambda < \lambda_0$, then $E_\lambda$ attains its minimum on $\mathcal{N}_\lambda^{-}$.
\end{lemma}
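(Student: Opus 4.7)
The plan is to mirror the argument used in the preceding lemma for $\mathcal{N}_\lambda^{+}$, but with the role played there by the negativity of the infimum now taken over by Corollary \ref{cor:2}, which guarantees
\[
c_{*} := \inf_{\mathcal{N}_\lambda^{-}} E_\lambda \geq \delta_1 > 0.
\]
This positivity will make the non-triviality of the weak limit essentially free. The non-emptiness of $\mathcal{N}_\lambda^{-}$ itself follows from Lemma \ref{lem:4}: since $c$ changes sign I can pick $v$ with $\int_{\mathbb{R}^N} c(x)|\tr v(x)|^q \, dx > 0$ and rescale it into $\mathcal{N}_\lambda^{-}$ via the fiber-map analysis.

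First I would pick a minimizing sequence $\{v_k\}_k \subset \mathcal{N}_\lambda^{-}$, invoke Lemma \ref{lem:3} for boundedness, and pass to a subsequence with $v_k \rightharpoonup v_\lambda$ in $H^{1}(\mathbb{R}^{N+1}_{+},y^{1-2\alpha})$. By Proposition \ref{prop:1} combined with Remark \ref{rem:2}, the two weighted integrals $\int b \, |\tr v_k|^{p} \, dx$ and $\int c\, |\tr v_k|^{q}\, dx$ converge to the corresponding ones for $v_\lambda$. To exclude $v_\lambda = 0$: if $v_k \rightharpoonup 0$, then by the Nehari identity $\|v_k\|_{H^{1}(\mathbb{R}^{N+1}_{+},y^{1-2\alpha})}^{2} \to 0$, whence $E_\lambda(v_k) \to 0$, contradicting $c_{*} \geq \delta_1 > 0$.

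The central step, and the main obstacle, is proving $v_\lambda \in C^{+}$. The idea is to exploit that, for every $v \in \mathcal{N}_\lambda^{-}$, $t=1$ is a \emph{global} maximum of $\varphi_v$ on $[0,+\infty)$; this can be checked by inspecting the fiber map in each of the possible cases $v \in B^{+}\cap C^{+}$, $v \in B^{0}\cap C^{+}$, $v \in B^{-}\cap C^{+}$ (recalling, as in Corollary \ref{cor:2}, that $\mathcal{N}_\lambda^{-} \subset C^{+}$). It then follows that $\varphi_{v_k}(t) \leq E_\lambda(v_k)$ for every $t \geq 0$, and the weak lower semicontinuity of the weighted $H^{1}$ norm together with the convergence of the nonlinear terms yields
\[
\varphi_{v_\lambda}(t) \leq \liminf_{k \to +\infty} \varphi_{v_k}(t) \leq c_{*} \quad \text{for all $t \geq 0$.}
\]
If instead $\int c\, |\tr v_\lambda|^{q}\, dx \leq 0$, a routine case analysis on the sign of $\int b\, |\tr v_\lambda|^{p}\, dx$ (using $p<2<q$ and $v_\lambda \neq 0$) shows that $\varphi_{v_\lambda}(t) \to +\infty$ as $t \to +\infty$, contradicting the above bound.

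Once $v_\lambda \in C^{+}$ is secured, Lemma \ref{lem:4} (in the $B^{-}\cap C^{+}$ case) or the two-critical-point lemma (in the $B^{+}\cap C^{+}$ case, which is exactly where $\lambda<\lambda_0$ is needed, with the residual $B^{0}\cap C^{+}$ case being direct) provides $t_\lambda > 0$ such that $t_\lambda v_\lambda \in \mathcal{N}_\lambda^{-}$. Since $t_\lambda$ realises the global maximum of $\varphi_{v_\lambda}$, one obtains
\[
c_{*} \leq E_\lambda(t_\lambda v_\lambda) = \max_{t \geq 0} \varphi_{v_\lambda}(t) \leq c_{*},
\]
so the infimum is attained at $t_\lambda v_\lambda$. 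Promoting this to $t_\lambda=1$, i.e.\ strong convergence $v_k \to v_\lambda$, would require an additional comparison between $D\varphi_{v_k}(t_\lambda)$ and its weak-limit value in the spirit of the previous lemma, but this refinement is not needed for the stated conclusion.
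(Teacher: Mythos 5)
Your proof is correct, and it reaches the conclusion by a route that differs from the paper's in two places. First, to show that the weak limit lies in $C^{+}$, the paper simply rewrites the functional on the Nehari manifold as
\[
E_\lambda(v_k)=\Bigl(\tfrac12-\tfrac1p\Bigr)\|v_k\|^2_{H^1(\mathbb{R}^{N+1}_{+},y^{1-2\alpha})}+\Bigl(\tfrac1p-\tfrac1q\Bigr)\int_{\mathbb{R}^N}c(x)|\tr v_k(x)|^q\,dx,
\]
and, since $\tfrac12-\tfrac1p<0$ while $E_\lambda(v_k)\to\inf_{\mathcal{N}_\lambda^{-}}E_\lambda>0$, concludes at once that $\int_{\mathbb{R}^N}c(x)|\tr v_\lambda(x)|^q\,dx>0$; your argument (excluding $v_\lambda=0$ via the Nehari identity and then contradicting the uniform bound $\varphi_{v_\lambda}(t)\le c_*$ by superquadratic growth when $v_\lambda\notin C^{+}$) is valid but longer. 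Second, and more substantially, the paper closes by proving strong convergence $v_k\to v_\lambda$: assuming the contrary, strict weak lower semicontinuity of the norm gives $E_\lambda(\tilde t v_\lambda)<\lim_k E_\lambda(v_k)=\inf_{\mathcal{N}_\lambda^{-}}E_\lambda$ with $\tilde t v_\lambda\in\mathcal{N}_\lambda^{-}$, a contradiction; then $\mathcal{N}_\lambda^0=\emptyset$ forces $v_\lambda\in\mathcal{N}_\lambda^{-}$. You instead squeeze $c_*\le E_\lambda(t_\lambda v_\lambda)=\max_{t\ge0}\varphi_{v_\lambda}(t)\le c_*$ and exhibit $t_\lambda v_\lambda$ as a minimizer without ever establishing strong convergence or $t_\lambda=1$. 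That suffices for the statement as written (and a minimizer on $\mathcal{N}_\lambda^{-}$, hence outside $\mathcal{N}_\lambda^0$, is still a critical point by the Lagrange-multiplier lemma), but it yields slightly less: the paper's version identifies the weak limit itself as the minimizer and gives compactness of minimizing sequences. Both arguments lean on the same two facts, namely that $t=1$ is a \emph{global} maximum of $\varphi_{v_k}$ for $v_k\in\mathcal{N}_\lambda^{-}$ when $\lambda<\lambda_0$, and the convergence of the weighted integrals from Proposition \ref{prop:1} and Remark \ref{rem:2}; in the $B^{+}\cap C^{+}$ case the global-maximum claim is exactly where $\lambda<\lambda_0$ enters, via the positivity of the interior maximum as in Corollary \ref{cor:2}, so that step deserves the explicit justification you only sketch.
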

\begin{proof}
We know from Corollary \ref{cor:2} that $\inf_{\mathcal{N}_\lambda^{-}} E_\lambda >0$. Let $\{v_k\}_k \subset \mathcal{N}_\lambda^{-}$ be a minimizin sequence, i.e.
\[
\lim_{k \to +\infty} E_\lambda (v_k) = \inf_{\mathcal{N}_\lambda^{-}} E_\lambda >0.
\]
By coercivity on $\mathcal{N}_\lambda^{-}$ we may assume without loss of generality that $\{v_k\}_k$ converges weakly to some $v_\lambda$ in $H^1(\mathbb{R}_{+}^{N+1},y^{1-2\alpha})$. Now,
\[
E_\lambda(v_k) = \left( \frac{1}{2} - \frac{1}{p} \right) \left\| v_k \right\|_{H^1(\mathbb{R}_{+}^{N+1},y^{1-2\alpha})}^2 + \left( \frac{1}{p} - \frac{1}{q} \right) \int_{\mathbb{R}^N} c(x) |\tr v_k(x)|^{q}\, dx.
\]
By the assumptions on $c$, Proposition \ref{prop:1} and Remark \ref{rem:2}
\[
\lim_{k \to +\infty} \int_{\mathbb{R}^N} c(x) |\tr v_k(x)|^{q}\, dx = \int_{\mathbb{R}^N} c(x) |\tr v_\lambda(x)|^{q}\, dx.
\]
Hence $\int_{\mathbb{R}^N} c(x) |\tr v_\lambda(x)|^{q}\, dx>0$. As a consequence by Lemma \ref{lem:4}, the fiber function $\varphi_{v_\lambda}$ possesses a global maximum at some $\tilde{t}=\tilde{t}(v_l)$ such that $\tilde{t} v_\lambda \in \mathcal{N}_\lambda^{-}$. 
On the other hand, $v_k \in \mathcal{N}_\lambda^{-}$ already implies that $1$ is a global maximum point for $\varphi_{v_k}$, namely $\varphi_{v_k}(t) \leq \varphi_{v_k}(1)$ for every $t>0$.

If \(\{v_k\}_k\) does not converge strongly to $v_\lambda$ in $H^1(\mathbb{R}_{+}^{N+1},y^{1-2\alpha})$, then
\begin{multline*}
E_\lambda (\tilde{t}v_\lambda) = \frac{\tilde{t}^2}{2} \int_{\mathbb{R}_{+}^{N+1}} y^{1-2\alpha} \left( |\nabla v_\lambda|^2 + v_\lambda^2 \right)\, dx\, dy - \frac{\lambda \tilde{t}^{p}}{p} \int_{\mathbb{R}^N} b(x) |\tr v_\lambda(x)|^{p}\, dx \\
{}- \frac{\tilde{t}^{q}}{q} \int_{\mathbb{R}^N} c(x) |\tr v_\lambda(x)|^{q}\, dx \\
< \liminf_{k \to +\infty} \frac{\tilde{t}^2}{2} \int_{\mathbb{R}_{+}^{N+1}} y^{1-2\alpha} \left( |\nabla v_k|^2 + v_k^2 \right)\, dx\, dy - \frac{\lambda \tilde{t}^{p}}{p} \int_{\mathbb{R}^N} b(x) |\tr v_k(x)|^{p}\, dx \\
{}- \frac{\tilde{t}^{q}}{q} \int_{\mathbb{R}^N} c(x) |\tr v_k(x)|^{q}\, dx \\
\leq \lim_{k \to +\infty} E_\lambda (\tilde{t}v_k) \leq \lim_{k \to +\infty} E_\lambda (v_k) = \inf_{\mathcal{N}_\lambda^{-}} E_\lambda,
\end{multline*}
a contradiction. We have proved that $v_k \to v_\lambda$ strongly in $H^1(\mathbb{R}_{+}^{N+1},y^{1-2\alpha})$ as $k \to +\infty$, and $\mathcal{N}_\lambda^0 = \emptyset$ finally implies that $v_\lambda \in \mathcal{N}_\lambda^{-}$.
\end{proof}
We are ready to prove our main result.
\begin{theorem}
Let $N>2\alpha$ and $1<p<2<q<2N/(N-2\alpha)$. Suppose that both $|b|$ and $|c|$  satisfy the compactness conditions (K) and that $b$ and $c$ change sign
according to Definition \ref{def:2}.
    Then there exists $\lambda_0>0$ such that for every $0<\lambda<\lambda_0$ equation (\ref{eq:14}) has at least two non-negative solutions.
\end{theorem}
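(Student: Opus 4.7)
The plan is to assemble the pieces already in place in Section \ref{sec:5}. Under the standing hypotheses, the two lemmas immediately preceding the theorem produce minimizers $v_\lambda^{+} \in \mathcal{N}_\lambda^{+}$ and $v_\lambda^{-} \in \mathcal{N}_\lambda^{-}$ of $E_\lambda$ restricted to $\mathcal{N}_\lambda^{+}$ and $\mathcal{N}_\lambda^{-}$ respectively, for every $\lambda \in (0,\lambda_0)$. Since $\lambda<\lambda_0$ forces $\mathcal{N}_\lambda^{0} = \emptyset$, the hypothesis $v_\lambda^{\pm} \notin \mathcal{N}_\lambda^{0}$ of the Lagrange-multiplier lemma is automatic, so both $v_\lambda^{+}$ and $v_\lambda^{-}$ satisfy $DE_\lambda(v_\lambda^{\pm})=0$ and are thus weak solutions of the extended Neumann system (\ref{eq:15}).

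The first step is to deduce that the two critical points are distinct: this is immediate from the disjointness $\mathcal{N}_\lambda^{+} \cap \mathcal{N}_\lambda^{-} = \emptyset$, together with the fact that their critical levels have opposite signs, namely $E_\lambda(v_\lambda^{+}) < 0$ (shown inside the proof of the $\mathcal{N}_\lambda^{+}$ minimization lemma, by using a test function in $B^{+}$ to produce a negative value on the fiber) while $E_\lambda(v_\lambda^{-}) \geq \delta_1 > 0$ by Corollary \ref{cor:2}. This rules out $v_\lambda^{+} = v_\lambda^{-}$ and, a fortiori, rules out that either one is the trivial solution.

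Next I would translate the two solutions of (\ref{eq:15}) back to solutions of the original nonlocal problem (\ref{eq:14}). Setting $u_\lambda^{\pm} = \tr v_\lambda^{\pm}$ and invoking the equivalence between the fractional equation and the Neumann extension problem recalled from \cite{Fall}, the traces $u_\lambda^{\pm}$ lie in $L^{\alpha,2}(\mathbb{R}^N)$ and solve (\ref{eq:14}) weakly. Since $u_\lambda^{+}\neq u_\lambda^{-}$ (otherwise the extensions, being the unique $y^{1-2\alpha}$-harmonic extensions of their common trace, would coincide), we obtain two distinct weak solutions.

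It remains to enforce non-negativity. The key observation is that $E_\lambda(|v|) = E_\lambda(v)$, because $|\nabla |v||^2 = |\nabla v|^2$ almost everywhere on $\mathbb{R}_{+}^{N+1}$ and the nonlinear terms depend only on $|\tr v|$. Moreover $|v|$ lies in the same stratum $\mathcal{N}_\lambda^{\pm}$ as $v$, since the three integrals defining $D\varphi_v$ and $D^2\varphi_v$ are unchanged when $v$ is replaced by $|v|$. Consequently we may assume from the outset that both minimizing sequences, hence both limits $v_\lambda^{\pm}$, are non-negative. Passing to traces yields $u_\lambda^{\pm} \geq 0$. I anticipate that the only subtle point is this non-negativity argument at the level of the extension: one has to be careful that $|v| \in H^1(\mathbb{R}_{+}^{N+1},y^{1-2\alpha})$ with the expected weighted gradient identity, but this is a routine consequence of Stampacchia's truncation theorem adapted to the weighted Sobolev space, and it is in fact the same device used in \cite{Brown,Goyal} that the author is following.
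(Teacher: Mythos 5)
Your proposal is correct, and the existence/distinctness part coincides with what the paper does: it too simply invokes the preceding lemmas for the two minimizers on $\mathcal{N}_\lambda^{+}$ and $\mathcal{N}_\lambda^{-}$ (distinct because $\inf_{\mathcal{N}_\lambda^{+}}E_\lambda<0<\delta_1\leq \inf_{\mathcal{N}_\lambda^{-}}E_\lambda$ by Corollary \ref{cor:2}) and the equivalence of (\ref{eq:14}) with the extension problem (\ref{eq:15}). Where you genuinely diverge is the non-negativity step, which is in fact the only step the paper's proof addresses explicitly. The paper truncates the nonlinearity, setting $f_{+}(x,t)=0$ for $t<0$, reruns the whole fibering-map machinery for the modified functional $E_\lambda^{+}$, and then tests $DE_\lambda^{+}(v_{\lambda,i})$ against the negative part $v_{\lambda,i}^{-}$ to force $v_{\lambda,i}^{-}=0$. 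You instead exploit the fact that $E_\lambda$ is already even in $v$ (it depends on $v$ only through $|\nabla v|$, $v^2$ and $|\tr v|$) and that the extension is a \emph{local} weighted Dirichlet energy, so $|\nabla|v||=|\nabla v|$ a.e.\ and $|v|$ sits in the same Nehari stratum at the same level; hence the minimizer can be replaced by its absolute value and remains a constrained minimizer, thus a critical point by the Lagrange-multiplier lemma. Your route is shorter and avoids re-verifying all the lemmas for a modified functional, but it hinges on the evenness of $E_\lambda$ and on the pointwise gradient identity, which is available precisely because one works with the Caffarelli--Silvestre-type extension rather than with a nonlocal Gagliardo seminorm (where $\||v|\|\leq\|v\|$ can be strict and $|v|$ could leave the Nehari manifold); the paper's truncation device is the more robust one and would survive a non-even nonlinearity $f(x,u)$, which is presumably why \cite{Brown} and \cite{Goyal} use it. Both arguments are sound here.
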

\begin{proof}
We only have to prove that the two minima  of $E_\lambda$ on $\mathcal{N}_\lambda^{+}$ and on $\mathcal{N}_\lambda^{-}$ respectively can be chosen to be non-negative. To this aim, we let
\begin{align*}
f(x,t) &= b(x)|t|^{p-2}t + c(x) |t|^{q-2}t \\
f_{+}(x,t) &= \begin{cases}
                  f(x,t) &\text{if $t \geq 0$} \\
                  0 &\text{if $t<0$}
                  \end{cases} \\
F_{+}(x,t) &= \int_0^t f_{+}(x,s)\, ds.
\end{align*}
To the functional 
\[
E_\lambda^{+} \colon v \mapsto \|v\|^2_{H^1(\mathbb{R}_{+}^{N+1},y^{1-2\alpha})} - \int_{\mathbb{R}^N} F_{+}(x,v(x))\, dx
\]
we can apply all the previous lemma, and thus for every $0<\lambda<\lambda_0$ the functional $E_\lambda^{+}$ possesses two critical points $v_{\lambda,1} \in \mathcal{N}_\lambda^{+}$ and $v_{\lambda,2} \in \mathcal{N}_\lambda^{-}$. Now, denoting by $v_{\lambda,1}^{-}$ the negative part of $v_{\lambda,1}$, we have
\begin{multline*}
0 = DE_\lambda^{+}(v_{\lambda,1}) v_{\lambda,1}^{-} =
\int_{\mathbb{R}_{+}^{N+1}} y^{1-2\alpha} \left( \nabla v_{\lambda,1} \cdot \nabla v_{\lambda,1}^{-} +v_{\lambda,1} v_{\lambda,1}^{-} \right) \, dx\, dy \\
\geq \int_{\mathbb{R}_{+}^{N+1}} y^{1-2\alpha} \left( |\nabla v_{\lambda,1}^{-} |^2 + |v_{\lambda,1}^{-}|^2 \right) \, dx\, dy.
\end{multline*}
Therefore $v_{\lambda,1} = v_{\lambda,1}^{+} \geq 0$. Since the same computation holds with $v_{\lambda,2}$ in place of $v_{\lambda,1}$, the proof is complete.
\end{proof}

\section{Perspectives and open problems}

As we have seen, the Bessel operator $(I-\Delta)^{\alpha}$ shares many features with the standard fractional Laplacian $(-\Delta)^{\alpha}$. However, the most striking difference between these two operators is that the latter has some scaling properties that the former does not have. As should be clear from (\ref{eq:G}), (\ref{eq:19}) and (\ref{eq:21}), the Bessel operator is not compatible with the semigroup $\mathbb{R}_{+}$ acting on functions as $s \star u \colon x \mapsto u(s^{-1}x)$ for $s>0$. In simpler words, the Bessel operator \emph{does not scale}.

In our opinion, a very challenging problem is that of finding solutions to the fractional scalar field equation
\begin{equation} \label{eq:20}
	(I-\Delta)^\alpha u = g(u) \quad\text{in $\mathbb{R}^N$}
\end{equation}
under the so-called \emph{Berestycki--Lions assumptions} on $g$. In \cites{BerLio1,BerLio2} the case $\alpha=1$ was studied under very mild assumptions on the nonlinear term $g$. In particular, no Ambrosetti--Rabinowitz assumption must be imposed, and no monotonicity assumption like in~(f3).

However, the main tool used in \cites{BerLio1,BerLio2} consists in a clever exploitation of the semigroup action $s \star u \colon x \mapsto u(s^{-1}x)$ for $s>0$. Indeed, solutions are constructed (roughly speaking) by solving the minimization problem
\[
\inf \left\{ \int_{\mathbb{R}^N} |\nabla u|^2 + u^2 \mid \int_{\mathbb{R}^N} G(u)=1 \right\},
\]
where $G$ is the antiderivative of $g$. Then a rescaling from $u$ to a suitable $s \star u$ produces a solution of (\ref{eq:20}) by absorbing the Lagrange multiplier. Let us propose a closely related question by considering the special case of equation (\ref{eq:1})
\begin{equation} \label{eq:24}
(I-\Delta)^\alpha u = |u|^{p-2}u \quad\text{in $\mathbb{R}^N$}.
\end{equation}
We continue to assume that \(1<p<2_\alpha^*\). In \cite{FelmerVergara} it is shown that (\ref{eq:24}) has infinitely many solutions. We claim that it possesses a radially symmetric ground state. Indeed, we consider the quotient
\begin{equation*} 
S = \inf \left\{
\int_{\mathbb{R}^N} \left| \left( I-\Delta)^{\alpha/2} u\right) \right|^2 \,dx \mid u \in L^{\alpha,2}(\mathbb{R}^N),\ \int_{\mathbb{R}^N} |u(x)|^p \, dx =1 
\right\}
\end{equation*}
To overcome the lack of compactness in $\mathbb{R}^N$ we can work in the subspace $L_{\mathrm{rad}}^{\alpha,2}(\mathbb{R}^N)$ consisting of radially symmetric elements of $L^{\alpha,2}(\mathbb{R}^N)$. The fact that $S$ is attained is now an immediate consequence of the compact embedding of $L_{\mathrm{rad}}^{\alpha,2}(\mathbb{R}^N)$ into $L^p(\mathbb{R}^N)$, see \cite{Sickel}. To prove that the minimizer of $S$ can be chosen to be radially symmetric, we use the following inequality: it is probably known, but we could not find a precise recerence in the literature.
\begin{proposition}
Assume that $u \in L^{\alpha,2}(\mathbb{R}^N)$, and let $u^*$ the usual symmetric-decreasing rearrangement of $u$. Then
\begin{equation} \label{eq:26}
\int_{\mathbb{R}^N} \left| \left( I-\Delta \right)^{\alpha/2} u^* \right|^2 \, dx \leq \int_{\mathbb{R}^N} \left| \left( I-\Delta \right)^{\alpha/2} u \right|^2 \, dx
\end{equation}
\end{proposition}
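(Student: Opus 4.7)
The plan is to reduce the inequality to a Riesz rearrangement inequality for the heat semigroup, via the subordination (Balakrishnan) formula for the fractional power. For $0<\alpha<1$ and $\lambda>0$, integration by parts gives
\[
\lambda^{\alpha} = \frac{\alpha}{\Gamma(1-\alpha)} \int_0^\infty (1-e^{-s\lambda})\, s^{-\alpha-1}\, ds.
\]
Applying this identity to $\lambda = 1 + |\xi|^2$, multiplying by $|\mathcal{F}u(\xi)|^2$, and using Plancherel's identity together with Tonelli's theorem (the integrand being nonnegative), I would rewrite the Bessel energy as
\[
\int_{\mathbb{R}^N} \bigl|(I-\Delta)^{\alpha/2}u\bigr|^2\, dx = \frac{\alpha}{\Gamma(1-\alpha)} \int_0^\infty s^{-\alpha-1}\Bigl(\|u\|_2^2 - e^{-s}\langle u, P_s u\rangle\Bigr)\, ds,
\]
where $P_s u = g_s * u$ denotes the heat semigroup generated by $\Delta$ and $g_s$ is the associated strictly positive, radially symmetric and radially decreasing Gaussian kernel.

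The crucial observation is that $g_s$ coincides with its own symmetric-decreasing rearrangement, so Riesz's rearrangement inequality yields, for every $s>0$,
\[
\langle u, P_s u\rangle = \iint u(x)\, g_s(x-y)\, u(y)\, dx\, dy \leq \iint |u(x)|\, g_s(x-y)\, |u(y)|\, dx\, dy \leq \langle u^*, P_s u^*\rangle,
\]
where the first step uses $g_s>0$ and the second is Riesz's inequality applied to the triple $(|u|,g_s,|u|)$ together with $u^{*}=(|u|)^{*}$. Combined with the elementary identity $\|u^*\|_2 = \|u\|_2$, this gives
\[
\|u^*\|_2^2 - e^{-s}\langle u^*, P_s u^*\rangle \;\leq\; \|u\|_2^2 - e^{-s}\langle u, P_s u\rangle
\]
for every $s>0$, and integration against the positive measure $\frac{\alpha}{\Gamma(1-\alpha)} s^{-\alpha-1}\, ds$ delivers (\ref{eq:26}).

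The only delicate point is the interchange of integrals: one must carry the expression $\|u\|_2^2 - e^{-s}\langle u, P_s u\rangle$ through as a single nonnegative integrand, since splitting it into two separate $s$-integrals would produce divergences both at $s=0$ and $s=+\infty$. Joint integrability in $(s,\xi)$ against $|\mathcal{F}u|^2$ is precisely
\[
\frac{\alpha}{\Gamma(1-\alpha)} \int_0^\infty s^{-\alpha-1}\bigl(1-e^{-s(1+|\xi|^2)}\bigr)\,ds \cdot |\mathcal{F}u(\xi)|^2 = (1+|\xi|^2)^\alpha |\mathcal{F}u(\xi)|^2,
\]
which is integrable whenever $u \in L^{\alpha,2}(\mathbb{R}^N)$; Tonelli then justifies every interchange. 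Beyond this bookkeeping I do not expect further obstacles: the scheme is the direct analogue of the classical Polya--Szego inequality for the fractional Laplacian, the only novelty being the exponential weight $e^{-s}$ arising from the shift by the identity in $I-\Delta$.
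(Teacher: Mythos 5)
Your argument is correct, and it follows a genuinely different route from the one in the paper. The paper adapts the Lieb--Yau scheme: it invokes the identity from Fall--Felli expressing the Bessel energy through a function $\vartheta$ solving a singular ODE, rewrites the resulting quadratic form as $t^{-2\alpha}\int \left( u\,(P(t,\cdot)*u) - u^2 \right)dx$ with a kernel $P(t,\cdot)$ built from the modified Bessel function $K_{(N+2\alpha)/2}$, verifies that this kernel is positive and radially non-increasing via $K_\nu'(s)=-\tfrac{\nu}{s}K_\nu(s)-K_{\nu-1}(s)<0$, applies Riesz's rearrangement inequality, and finally lets $t\to 0$. You instead subordinate $(1+|\xi|^2)^\alpha$ to the heat semigroup through the Balakrishnan formula $\lambda^\alpha=\tfrac{\alpha}{\Gamma(1-\alpha)}\int_0^\infty(1-e^{-s\lambda})s^{-\alpha-1}\,ds$, so that the role of $P(t,\cdot)$ is played by the Gaussian $e^{-s}g_s$, whose positivity and symmetric-decreasing character are immediate, and the limit $t\to0$ is replaced by integration in $s$ against $s^{-\alpha-1}ds$. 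Both proofs ultimately rest on the same two ingredients --- Riesz's rearrangement inequality for a positive symmetric-decreasing kernel and the equimeasurability identity $\|u^*\|_2=\|u\|_2$ --- but your version is more self-contained (no appeal to the Fall--Felli representation or to properties of $K_\nu$) and extends verbatim to any operator $\phi(-\Delta)$ with $\phi$ a Bernstein function, while the paper's version stays closer to the extension-problem formalism used elsewhere in the article. Your handling of the interchange of integrals is also sound: keeping $\|u\|_2^2-e^{-s}\langle u,P_su\rangle$ as a single nonnegative integrand is exactly what makes Tonelli applicable, and it has the further benefit that the identity holds for $u^*$ without assuming a priori that $u^*\in L^{\alpha,2}(\mathbb{R}^N)$, this membership being a consequence of the inequality itself.
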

\begin{proof}
We adapt the proof given by \cite{LiebYau}*{Appendix A}, see also \cite{Lieb}. By \cite{Fall}*{Eq. (7.5)},
\begin{equation*}
\kappa_\alpha \int_{\mathbb{R}^N} \left| (-\Delta+1)^{\alpha/2}u(x) \right|^2 \, dx = -2\alpha \int_{\mathbb{R}^N} \frac{\vartheta (\sqrt{|\xi|^2 +1}t)-1}{t^{2\alpha}} |\mathcal{F}u(\xi)|^2 \, d\xi,
\end{equation*}
where $\kappa_\alpha$ is an explicit positive constant and $\vartheta$ solves the ordinary differential equation
\[
\vartheta''+\frac{1-2\alpha}{t}\vartheta' -\vartheta =0, \quad
\vartheta(0)=1.
\]
Moreover,
\begin{equation} \label{eq:27}
\int_{\mathbb{R}^N} \frac{\vartheta (\sqrt{|\xi|^2 +1}t)-1}{t^{2\alpha}} |\mathcal{F}u(\xi)|^2 \, d\xi=
t^{-2\alpha} \int_{\mathbb{R}^N} \left( u(x)P(t,\cdot)*u(x)-u(x)^2 \right)\, dx,
\end{equation}
where
\[
P(t,x)= C'_{N,\alpha} t^{2\alpha} \left( |x|^2 + t^2 \right)^{-\frac{N+2\alpha}{4}} K_{\frac{N+2\alpha}{2}} \left( \sqrt{|x|^2+t^2} \right),
\]
$P(t,\cdot)*u(x)=\int_{\mathbb{R}^N}P(t,x-y)u(y)\, dy$ and $C'_{N,\alpha}$ is another explicit positive constant. Since $K_{\frac{N+2\alpha}{2}}$ is positive and non-increasing (because $K_\nu(s)>0$ and $K'_\nu(s) = -\frac{\nu}{s}K_\nu (s) - K_{\nu -1}(s)<0$ for all $s >0$), we can apply an inequality by Riesz \cite{Lieb}*{Eq. (3.9)} and conclude that the right-hand side of (\ref{eq:27}) decreases if we replace $u$ by $u^*$. We achieve (\ref{eq:26}) by letting $t \to 0$.
\end{proof}
The proof of the following result is now straightforward.
\begin{theorem}
Let $1<p<2_\alpha^*$. Then equation (\ref{eq:24}) possesses a radially symmetric ground state.
\end{theorem}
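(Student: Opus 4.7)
The plan is to minimize the Bessel Dirichlet energy under a prescribed $L^p$ constraint, use the compact embedding in the radial subspace to obtain a minimizer, invoke the rearrangement inequality just established to ensure radial symmetry and globality of the minimum, and finally absorb the Lagrange multiplier via the positive homogeneity of the nonlinearity (even though the operator itself does not scale).

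Concretely, first I would consider the restricted infimum
\[
S_{\mathrm{rad}} = \inf \left\{ \int_{\mathbb{R}^N} \left|(I-\Delta)^{\alpha/2} u\right|^2 \,dx \mid u \in L^{\alpha,2}_{\mathrm{rad}}(\mathbb{R}^N),\ \int_{\mathbb{R}^N}|u|^p \,dx = 1 \right\},
\]
and pick a minimizing sequence $\{u_n\}_n$. Its Bessel norm is bounded, so along a subsequence $u_n$ converges weakly in $L^{\alpha,2}_{\mathrm{rad}}$ to some $u_0$; the compact embedding into $L^p$ quoted from \cite{Sickel} preserves the constraint $\|u_0\|_p=1$, and weak lower semicontinuity of the Bessel norm then yields that $u_0$ attains $S_{\mathrm{rad}}$. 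At this point the inequality (\ref{eq:26}), combined with the equimeasurability identity $\|u^*\|_p=\|u\|_p$, immediately gives $S=S_{\mathrm{rad}}$, so $u_0$ is in fact a global minimizer, and we may further assume $u_0\ge 0$ is radially nonincreasing.

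The Lagrange multiplier rule then yields $(I-\Delta)^\alpha u_0=\mu |u_0|^{p-2}u_0$ in the weak sense, and testing against $u_0$ itself produces $\mu=S>0$. Here is the key observation: although $(I-\Delta)^\alpha$ carries no nontrivial spatial scaling, the nonlinearity is positively homogeneous of degree $p-1$, so setting $v=\mu^{1/(p-2)} u_0$, a pointwise (not spatial) rescaling, delivers a nonnegative radially symmetric solution of (\ref{eq:24}). The ground state property is then a one-line check: for any other nontrivial solution $w$, testing the equation against $w$ gives $\|(I-\Delta)^{\alpha/2}w\|_2^2=\|w\|_p^p$, and applying the definition of $S$ to $w/\|w\|_p$ yields $S\le \|w\|_p^{p-2}$; hence the Euler functional satisfies
\[
J(w)=\left(\tfrac{1}{2}-\tfrac{1}{p}\right)\|w\|_p^p\ge \left(\tfrac{1}{2}-\tfrac{1}{p}\right) S^{p/(p-2)}=J(v).
\]

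No step is seriously hard, which is why the paper calls the proof straightforward; the only point worth underlining is conceptual. In the Berestycki--Lions philosophy one removes the Lagrange multiplier by a genuine rescaling of the spatial variable, which is unavailable here; the pure-power nonlinearity $|u|^{p-2}u$ lets us replace that device with the pointwise rescaling $u_0\mapsto \mu^{1/(p-2)}u_0$. Everything else is standard: compactness in the radial class, weak lower semicontinuity, and the rearrangement inequality \eqref{eq:26}.
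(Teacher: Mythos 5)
Your argument is correct and is essentially the paper's own proof: the paper establishes exactly the two ingredients you use (attainment of the infimum in $L^{\alpha,2}_{\mathrm{rad}}(\mathbb{R}^N)$ via the compact radial embedding from \cite{Sickel}, and the rearrangement inequality (\ref{eq:26}) to identify $S$ with $S_{\mathrm{rad}}$), and then declares the rest straightforward, which is precisely the Lagrange-multiplier-plus-amplitude-rescaling step you spell out. The only caveat --- that the rescaling $\mu^{1/(p-2)}$ degenerates at $p=2$, and that the $L^p$ constraint and the radial compactness genuinely require $2<p<2_\alpha^*$ --- is inherited from the paper's own formulation and is not a defect introduced by your write-up.
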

\begin{remark}
By the results of \cite{Li}, if $u=G_{2\alpha}*u^p$ with $u \in L^r(\mathbb{R}^N)$ and $r > \max \left\{ p,N(p-1)/(2\alpha) \right\}$ then $u$ is radially symmetric and decreasing about some point. The previous result is weaker but much easier to prove. 	
\end{remark}

What happens if we replace the power $|u|^{p-2}u$ in (\ref{eq:24}) with a more general nonlinear term $f(u)$ with subcritical growth at infinity? It is not hard to check that we can solve the minimum problem
\begin{equation*} 
\widetilde{S} = \inf \left\{
\int_{\mathbb{R}^N} \left| \left( I-\Delta)^{\alpha/2} u\right) \right|^2 \,dx \mid u \in L^{\alpha,2}(\mathbb{R}^N),\ \int_{\mathbb{R}^N} F(u(x)) \, dx =1 
\right\}
\end{equation*}
in the framework of radially symmetric functions, but getting rid of the associated Lagrange multiplier is a complicated task. This difficulty is again caused by the lack of scaling invariance for the Bessel fractional operator.

For very similar reasons, it seems that the Bessel operator \( (I-\Delta)^\alpha \) does not produce 
strong variational identities like the local Laplacian or the fractional Laplacian. Since identities 
like Pohozaev's are a consequence of the action of the very same semigroup $s \star u$, we can imagine 
that some troubles arise. Indeed, Felmer \emph{et al.} proved in \cite{FelmerVergara}*{Proposition 5.1}
that the following pointwise identity holds for every $\varphi \in \mathcal{S}(\mathbb{R}^N)$:
\begin{equation} \label{eq:23}
(I-\Delta)^\alpha \left( x \cdot \nabla \varphi \right) = x \cdot \nabla \left[ (I-\Delta)^\alpha \varphi \right] + 2\alpha (I-\Delta)^\alpha \varphi - 2\alpha (I-\Delta)^{\alpha-1} \varphi.
\end{equation}
Here the crucial remark is that such a formula for an ``integration by parts'' involves the \emph{inverse} 
Bessel operator $(I-\Delta)^{\alpha-1}$, since $\alpha -1 <0$. In other words, a direct attempt to extend 
the Pohozaev identity leads to terms that are different in nature from those appearing in the problem. Indeed, if $(I-\Delta)^\alpha u = f(u)$ and $u$ decays sufficiently fast at infinity, we can justify the following formal computation, see \cite{FelmerVergara} for a similar result:
\[
\int_{\mathbb{R}^N} \langle x \mid \nabla u \rangle (I-\Delta)^\alpha u \, dx = \int_{\mathbb{R}^N} \langle x \mid \nabla u \rangle f(u(x))\, dx.
\]
Setting
\[
(I) = \int_{\mathbb{R}^N} \langle x \mid \nabla u \rangle (I-\Delta)^\alpha u \, dx, \quad (II) = \int_{\mathbb{R}^N} \langle x \mid \nabla u \rangle f(u(x))\, dx,
\]
we deduce that
\[
(I) = \left( 2\alpha -N \right) \int_{\mathbb{R}^N} u(x)f(u(x))\, dx + N \int_{\mathbb{R}^N} F(u(x))\, dx - 2\alpha \int_{\mathbb{R}^N} u(x) (I-\Delta)^{\alpha-1} u(x) \, dx
\]
and
\[
(II)=-N \int_{\mathbb{R}^N} F(u(x))\, dx.
\]
Therefore the solution $u$ satisfies the variational identity
\begin{equation} \label{eq:22}
2\alpha \int_{\mathbb{R}^N} u(x) (I-\Delta)^{\alpha-1} u(x) \, dx = 2N \int_{\mathbb{R}^N} F(u(x))\, dx - (N-2\alpha) \int_{\mathbb{R}^N} u(x) f(u(x))\, dx.
\end{equation}
The left-hand side of (\ref{eq:22}) is finite if and only if $u \in L^{\alpha-1,2}(\mathbb{R}^N)$, and this is 
certainly true since $0<\alpha<1$ and therefore $L^{\alpha,2}(\mathbb{R}^N) \subset L^{\alpha-1,2}(\mathbb{R}^N)$, 
see \cite{Lions}*{Equation (7.3)}. Anyway, the identity (\ref{eq:22}) is structurally different than the usual 
Pohozaev identity in the local case $\alpha=1$. 

We observe that for the standard fractional Laplacian $(-\Delta)^\alpha$ the analogous of (\ref{eq:23}) reads
\begin{equation*}
(-\Delta)^\alpha \langle x \mid \nabla u \rangle = 2\alpha (-\Delta)^\alpha u + \langle x \mid \nabla (-\Delta)^\alpha u \rangle,
\end{equation*}
see \cite{Ros-Oton}*{page 602}, so that we get a more useful variational identity.

It would be interesting to understand if classical results like those proved in \cites{BerLio1,BerLio2} 
using variational identities can be somehow extended to Bessel fractional operators.

\begin{bibdiv}

\begin{biblist}
 \bib{Adams}{book}{
   author={Adams, David R.},
   author={Hedberg, Lars Inge},
   title={Function spaces and potential theory},
   series={Grundlehren der Mathematischen Wissenschaften [Fundamental
   Principles of Mathematical Sciences]},
   volume={314},
   publisher={Springer-Verlag, Berlin},
   date={1996},
   pages={xii+366},
   isbn={3-540-57060-8},
   review={\MR{1411441 (97j:46024)}},
   doi={10.1007/978-3-662-03282-4},
}

\bib{Alves}{article}{
   author={Alves, Claudianor O.},
   author={Souto, Marco A. S.},
   title={Existence of solutions for a class of nonlinear Schr\"odinger
   equations with potential vanishing at infinity},
   journal={J. Differential Equations},
   volume={254},
   date={2013},
   number={4},
   pages={1977--1991},
   issn={0022-0396},
   review={\MR{3003299}},
   doi={10.1016/j.jde.2012.11.013},
}

\bib{BerLio1}{article}{
   author={Berestycki, H.},
   author={Lions, P.-L.},
   title={Nonlinear scalar field equations. I. Existence of a ground state},
   journal={Arch. Rational Mech. Anal.},
   volume={82},
   date={1983},
   number={4},
   pages={313--345},
   issn={0003-9527},
   review={\MR{695535 (84h:35054a)}},
   doi={10.1007/BF00250555},
}

\bib{BerLio2}{article}{
   author={Berestycki, H.},
   author={Lions, P.-L.},
   title={Nonlinear scalar field equations. II. Existence of infinitely many
   solutions},
   journal={Arch. Rational Mech. Anal.},
   volume={82},
   date={1983},
   number={4},
   pages={347--375},
   issn={0003-9527},
   review={\MR{695536 (84h:35054b)}},
   doi={10.1007/BF00250556},
}

\bib{Brown}{article}{
   author={Brown, Kenneth J.},
   author={Wu, Tsung-Fang},
   title={A fibering map approach to a semilinear elliptic boundary value
   problem},
   journal={Electron. J. Differential Equations},
   date={2007},
   pages={No. 69, 9},
   issn={1072-6691},
   review={\MR{2308869 (2008a:35101)}},
}

\bib{Bucur}{article}{
 author={Bucur, Claudia},
 author={Valdinoci, Enrico},
 title={Nonlocal diffusion and applications},
 eprint={arXiv:1504.08292},
}

\bib{Carmona}{article}{
   author={Carmona, Ren{\'e}},
   author={Masters, Wen Chen},
   author={Simon, Barry},
   title={Relativistic Schr\"odinger operators: asymptotic behavior of the
   eigenfunctions},
   journal={J. Funct. Anal.},
   volume={91},
   date={1990},
   number={1},
   pages={117--142},
   issn={0022-1236},
   review={\MR{1054115 (91i:35139)}},
   doi={10.1016/0022-1236(90)90049-Q},
}

\bib{Cerami}{article}{
  author={Cerami, Giovanna},
  title={Un criterio di esistenza per i punti critici su variet\`a illimitate},
  journal={Rend. Acad. Sci. Lett. Istituto Lombardo},
  volume={112},
  year={1978},
  pages={332--336},
}

\bib{Chabrowski}{article}{
   author={Chabrowski, J.},
   title={Concentration-compactness principle at infinity and semilinear
   elliptic equations involving critical and subcritical Sobolev exponents},
   journal={Calc. Var. Partial Differential Equations},
   volume={3},
   date={1995},
   number={4},
   pages={493--512},
   issn={0944-2669},
   review={\MR{1385297 (97j:35029)}},
   doi={10.1007/BF01187898},
}

\bib{Chang}{article}{
   author={Chang, X.},
   author={Wang, Z.-Q.},
   title={Ground state of scalar field equations involving a fractional
   Laplacian with general nonlinearity},
   journal={Nonlinearity},
   volume={26},
   date={2013},
   number={2},
   pages={479--494},
   issn={0951-7715},
   review={\MR{3007900}},
   doi={10.1088/0951-7715/26/2/479},
}

\bib{CingolaniSecchi1}{article}{
   author={Cingolani, Silvia},
   author={Secchi, Simone},
   title={Ground states for the pseudo-relativistic Hartree equation with
   external potential},
   journal={Proc. Roy. Soc. Edinburgh Sect. A},
   volume={145},
   date={2015},
   number={1},
   pages={73--90},
   issn={0308-2105},
   review={\MR{3304576}},
   doi={10.1017/S0308210513000450},
}

\bib{CingolaniSecchi2}{article}{
   author={Cingolani, Silvia},
   author={Secchi, Simone},
   title={Semiclassical analysis for pseudorelativistic Hartree equations},
   journal={J. Differential Equations},
   volume={258},
   date={2015},
   pages={4156--4179},
   doi={10.1016/j.jde.2015.01.029},
}

\bib{CZN1}{article}{
   author={Coti Zelati, Vittorio},
   author={Nolasco, Margherita},
   title={Existence of ground states for nonlinear, pseudo-relativistic
   Schr\"odinger equations},
   journal={Atti Accad. Naz. Lincei Cl. Sci. Fis. Mat. Natur. Rend. Lincei
   (9) Mat. Appl.},
   volume={22},
   date={2011},
   number={1},
   pages={51--72},
   issn={1120-6330},
   review={\MR{2799908 (2012d:35346)}},
   doi={10.4171/RLM/587},
}

\bib{CZN2}{article}{
   author={Coti Zelati, Vittorio},
   author={Nolasco, Margherita},
   title={Ground states for pseudo-relativistic Hartree equations of
   critical type},
   journal={Rev. Mat. Iberoam.},
   volume={29},
   date={2013},
   number={4},
   pages={1421--1436},
   issn={0213-2230},
   review={\MR{3148610}},
   doi={10.4171/RMI/763},
}

\bib{Fall}{article}{
  author={Fall, M.M.},
  author={Felli, V.},
  title={Unique continuation properties for relativistic Schr\"{o}dinger operators with a singular potential},
  eprint={arXiv:1312.6516},
}

\bib{Felmer}{article}{
   author={Felmer, Patricio},
   author={Quaas, Alexander},
   author={Tan, Jinggang},
   title={Positive solutions of the nonlinear Schr\"odinger equation with
   the fractional Laplacian},
   journal={Proc. Roy. Soc. Edinburgh Sect. A},
   volume={142},
   date={2012},
   number={6},
   pages={1237--1262},
   issn={0308-2105},
   review={\MR{3002595}},
   doi={10.1017/S0308210511000746},
}

\bib{FelmerVergara}{article}{
  author={Felmer, Patricio},
  author={Vergara, I.},
  title={Scalar field equations with non-local diffusion},
  journal={NoDEA},
  date={2015},
  status={to appear},
}

\bib{Frank}{article}{
  author={Frank, Rupert L},
  author={Lenzmann, Enno},
  author={Silvestre, Luis},
  title={Uniqueness of radial solutions for the fractional Laplacian},
  year={2013},
  eprint={arXiv:1302.2652},
}

\bib{Gale}{article}{
   author={Gal{\'e}, Jos{\'e} E.},
   author={Miana, Pedro J.},
   author={Stinga, Pablo Ra{\'u}l},
   title={Extension problem and fractional operators: semigroups and wave
   equations},
   journal={J. Evol. Equ.},
   volume={13},
   date={2013},
   number={2},
   pages={343--368},
   issn={1424-3199},
   review={\MR{3056307}},
   doi={10.1007/s00028-013-0182-6},
}


\bib{Goyal}{article}{
  author={Goyal, S.},
  author={Sreenadh, K.},
  title={A Nehari manifold for non-local elliptic operator with concave-convex non-linearities and sign-changing weight function},
  eprint={arXiv:1307.5149},
}

\bib{Jeanjean}{article}{
   author={Jeanjean, Louis},
   author={Tanaka, Kazunaga},
   title={A remark on least energy solutions in ${\mathbb{R}}^N$},
   journal={Proc. Amer. Math. Soc.},
   volume={131},
   date={2003},
   number={8},
   pages={2399--2408 (electronic)},
   issn={0002-9939},
   review={\MR{1974637 (2004c:35127)}},
   doi={10.1090/S0002-9939-02-06821-1},
}

\bib{Laskin1}{article}{
   author={Laskin, Nikolai},
   title={Fractional quantum mechanics and L\'evy path integrals},
   journal={Phys. Lett. A},
   volume={268},
   date={2000},
   number={4-6},
   pages={298--305},
   issn={0375-9601},
   review={\MR{1755089 (2000m:81097)}},
   doi={10.1016/S0375-9601(00)00201-2},
}

\bib{Laskin2}{article}{
   author={Laskin, Nick},
   title={Fractional Schr\"odinger equation},
   journal={Phys. Rev. E (3)},
   volume={66},
   date={2002},
   number={5},
   pages={056108, 7},
   issn={1539-3755},
   review={\MR{1948569 (2003k:81043)}},
   doi={10.1103/PhysRevE.66.056108},
}

\bib{Lieb}{article}{
   author={Lieb, Elliott H.},
   title={Existence and uniqueness of the minimizing solution of Choquard's
   nonlinear equation},
   journal={Studies in Appl. Math.},
   volume={57},
   date={1976/77},
   number={2},
   pages={93--105},
   review={\MR{0471785 (57 \#11508)}},
}

\bib{LiebYau}{article}{
   author={Lieb, Elliott H.},
   author={Yau, Horng-Tzer},
   title={The Chandrasekhar theory of stellar collapse as the limit of
   quantum mechanics},
   journal={Comm. Math. Phys.},
   volume={112},
   date={1987},
   number={1},
   pages={147--174},
   issn={0010-3616},
   review={\MR{904142 (89b:82014)}},
}

\bib{Lions}{book}{
	author={Lions, Pierre-Louis},
	author={Magenes, Enrico},
	title={Probl\`emes aux limites non-homog\`enes et applications},
	volume={1},
	publisher={Dunod, Paris},
	year={1969},
}

\bib{Li}{article}{
   author={Ma, Li},
   author={Chen, Dezhong},
   title={Radial symmetry and monotonicity for an integral equation},
   journal={J. Math. Anal. Appl.},
   volume={342},
   date={2008},
   number={2},
   pages={943--949},
   issn={0022-247X},
   review={\MR{2445251 (2009m:35151)}},
   doi={10.1016/j.jmaa.2007.12.064},
}

\bib{Li2}{article}{
   author={Ma, Li},
   author={Chen, Dezhong},
   title={Radial symmetry and uniqueness for positive solutions of a
   Schr\"odinger type system},
   journal={Math. Comput. Modelling},
   volume={49},
   date={2009},
   number={1-2},
   pages={379--385},
   issn={0895-7177},
   review={\MR{2480059 (2010a:35072)}},
   doi={10.1016/j.mcm.2008.06.010},
}

\bib{Palatucci}{article}{
   author={Palatucci, Giampiero},
   author={Pisante, Adriano},
   title={Improved Sobolev embeddings, profile decomposition, and
   concentration-compactness for fractional Sobolev spaces},
   journal={Calc. Var. Partial Differential Equations},
   volume={50},
   date={2014},
   number={3-4},
   pages={799--829},
   issn={0944-2669},
   review={\MR{3216834}},
   doi={10.1007/s00526-013-0656-y},
}

\bib{Rabinowitz}{article}{
   author={Rabinowitz, Paul H.},
   title={On a class of nonlinear Schr\"odinger equations},
   journal={Z. Angew. Math. Phys.},
   volume={43},
   date={1992},
   number={2},
   pages={270--291},
   issn={0044-2275},
   review={\MR{1162728 (93h:35194)}},
   doi={10.1007/BF00946631},
}

\bib{Ros-Oton}{article}{
   author={Ros-Oton, Xavier},
   author={Serra, Joaquim},
   title={The Pohozaev identity for the fractional Laplacian},
   journal={Arch. Ration. Mech. Anal.},
   volume={213},
   date={2014},
   number={2},
   pages={587--628},
   issn={0003-9527},
   review={\MR{3211861}},
   doi={10.1007/s00205-014-0740-2},
} 

\bib{Sickel}{article}{
   author={Sickel, Winfried},
   author={Skrzypczak, Leszek},
   title={Radial subspaces of Besov and Lizorkin-Triebel classes: extended
   Strauss lemma and compactness of embeddings},
   journal={J. Fourier Anal. Appl.},
   volume={6},
   date={2000},
   number={6},
   pages={639--662},
   issn={1069-5869},
   review={\MR{1790248 (2002h:46056)}},
   doi={10.1007/BF02510700},
}

\bib{Stein}{book}{
   author={Stein, Elias M.},
   title={Singular integrals and differentiability properties of functions},
   series={Princeton Mathematical Series, No. 30},
   publisher={Princeton University Press, Princeton, N.J.},
   date={1970},
   pages={xiv+290},
   review={\MR{0290095 (44 \#7280)}},
}

\bib{Stinga}{article}{
   author={Stinga, Pablo Ra{\'u}l},
   author={Torrea, Jos{\'e} Luis},
   title={Extension problem and Harnack's inequality for some fractional
   operators},
   journal={Comm. Partial Differential Equations},
   volume={35},
   date={2010},
   number={11},
   pages={2092--2122},
   issn={0360-5302},
   review={\MR{2754080 (2012c:35456)}},
   doi={10.1080/03605301003735680},
}

\bib{Strichartz}{article}{
   author={Strichartz, Robert S.},
   title={Analysis of the Laplacian on the complete Riemannian manifold},
   journal={J. Funct. Anal.},
   volume={52},
   date={1983},
   number={1},
   pages={48--79},
   issn={0022-1236},
   review={\MR{705991 (84m:58138)}},
   doi={10.1016/0022-1236(83)90090-3},
}

\bib{Stuart}{article}{
   author={Stuart, C. A.},
   title={Bifurcation for Neumann problems without eigenvalues},
   journal={J. Differential Equations},
   volume={36},
   date={1980},
   number={3},
   pages={391--407},
   issn={0022-0396},
   review={\MR{576158 (81m:47089)}},
   doi={10.1016/0022-0396(80)90057-1},
}

\bib{Tan}{article}{
   author={Tan, Jinggang},
   author={Wang, Ying},
   author={Yang, Jianfu},
   title={Nonlinear fractional field equations},
   journal={Nonlinear Anal.},
   volume={75},
   date={2012},
   number={4},
   pages={2098--2110},
   issn={0362-546X},
   review={\MR{2870902 (2012k:35585)}},
   doi={10.1016/j.na.2011.10.010},
}

\end{biblist}

\end{bibdiv}

\end{document}